\newtheorem{thm}{Theorem}[section]
\newtheorem{lem}[thm]{Lemma}
\newtheorem{defin}[thm]{Definition}
\newtheorem{rmk}[thm]{Remark}
\newtheorem{cor}[thm]{Corollary}
\newcommand{\kc}{\mathcal{C}}
\newcommand{\pc}{\mathcal{P}_\mathcal{C}}
\DeclareMathOperator{\inv}{inv}
\DeclareMathOperator{\invs}{invsum}
\title[On the exact region determined by Kendall's tau and \mbox{Spearman's rho}]{On the exact 
region determined by Kendall's tau and \\ \mbox{Spearman's rho}}
\author[Manuela Schreyer {\it et al.}]{Manuela Schreyer}
\email{manuelalarissa.schreyer@sbg.ac.at}
\author{Roland Paulin\thanks{Supported by the Austrian Science Fund (FWF): P24574}}
\author[Schreyer, Paulin, Trutschnig]{Wolfgang Trutschnig\thanks{Corresponding author, 
Hellbrunnerstr. 34, A-5020 Salzburg, wolfgang.trutschnig@sbg.ac.at}}
\address{University of Salzburg, Salzburg, Austria.}
\begin{document}

\begin{abstract}
Using properties of shuffles of copulas and tools from combinatorics we solve the open question about the exact region 
$\Omega$ determined by all possible values of Kendall's $\tau$ and Spearman's $\rho$. 
In particular, we prove that the well-known inequality established by Durbin and Stuart in 1951 is not sharp outside 
a countable set, give a simple analytic characterization of $\Omega$ in terms of 
a continuous, strictly increasing piecewise concave function, and show that 
$\Omega$ is compact and simply connected, but not convex. 
The results also show that for each $(x,y)\in \Omega$ there are mutually completely dependent random variables $X,Y$
whose $\tau$ and $\rho$ values coincide with $x$ and $y$ respectively. 
\end{abstract}
\keywords{Concordance, Copula, Kendall tau, Shuffle, Spearman rho}

\section{Introduction}
Kendall's $\tau$ and Spearman's $\rho$ are, without doubt, the two most famous nonparametric measures of concordance.
Given random variables $X,Y$ with continuous distribution functions $F$ and $G$ respectively, Spearman's $\rho$
is defined as the Pearson correlation coefficient of the $\mathcal{U}(0,1)$-distributed random 
variables $U:=F \circ X$ and $V:=G \circ Y$ whereas Kendall's $\tau$ is given by the probability of concordance minus the 
probability of discordance, i.e. 
\begin{align*}
\rho(X,Y)&= 12\big(\mathbb{E}(UV)-\tfrac{1}{4}\big) \\
\tau(X,Y)&= \mathbb{P}\big((X_1-X_2)(Y_1 -Y_2)>0) - \mathbb{P}\big((X_1-X_2)(Y_1 -Y_2)<0\big),
\end{align*}
where $(X_1,Y_1)$ and $(X_2,Y_2)$ are independent and have the same distribution as $(X,Y)$. 
Since both measures are scale invariant they only depend on the underlying (uniquely determined) copula $A$ of $(X,Y)$.
It is well known and straightforward to verify \citep{Ne} that, given the copula $A$ of $(X,Y)$, 
Kendall's $\tau$ and Spearman's $\rho$ can be expressed as
\begin{align}
\tau(X,Y)&= 4 \int_{[0,1]^2} A(x,y)\, d\mu_A(x,y) - 1=:\tau(A)  \label{deftau} \\
\rho(X,Y)&= 12 \int_{[0,1]^2} xy \,d\mu_A(x,y) - 3=:\rho(A) \label{defrho}, 
\end{align}
where $\mu_A$ denotes the doubly stochastic measure corresponding to $A$. Considering that 
$\tau$ and $\rho$ quantify different aspects of the underlying dependence structure 
 \citep{FreNe} a very natural question is how much they can 
differ, i.e. if $\tau(X,Y)$ is known which values may $\rho(X,Y)$ assume and vice versa. 
The first of the following two well-known universal inequalities
between $\tau$ and $\rho$ goes back to \cite{Dan}, the second one to \cite{D+S} (for alternative proofs see \cite{Kr,Ge,Ne}):  
\begin{equation}\label{ineqdaniels}
\vert 3 \tau-2 \rho \vert \leq 1
\end{equation}
\begin{equation}\label{ineqdurbinstuart}
\frac{(1+\tau)^2}{2} -1 \leq \rho \leq 1- \frac{(1-\tau)^2}{2}
\end{equation}
The inequalities together yield the set $\Omega_0$ (see Figure \ref{regionintro}) 
which we will refer to as \emph{classical $\tau$-$\rho$ region} in the sequel.
Daniels' inequality is known to be sharp \citep{Ne} whereas the first part of the inequality by Durbin and Stuart
is only known to be sharp at the points 
$\boldsymbol{p}_n=(-1+\frac{2}{n},-1+\frac{2}{n^2})$ with $n \geq 2$ (which, using symmetry, is
to say that the second part is sharp at the points $-\boldsymbol{p}_n$). 
Although both inequalities are known since the 1950s and the interrelation between $\tau$ and $\rho$ has 
received much attention also in recent years, in particular concerning the so-called
Hutchinson-Lai conjecture \citep{FreNe,Hue,BL}, 
to the best of the authors' knowledge the \emph{exact $\tau$-$\rho$ region $\Omega$}, defined
by ($\kc$ denoting the family of all two-dimensional copulas)
\begin{align}\label{defregionall}
\Omega&=\big\{(\tau(X,Y),\rho(X,Y)):\,\, X,Y \textrm{ continuous random variables}\big\} \\
 &= \big\{(\tau(A),\rho(A)):\,\, A \in \kc\big\} \nonumber,
\end{align} 
is still unknown.

In this paper we solve the sixty year old question and give a full characterization of $\Omega$. 
We derive a piecewise concave, strictly increasing, continuous 
function $\Phi:[-1,1]\rightarrow [-1,1]$ and (see Theorem \ref{mainres} and Theorem \ref{identical}) prove that 
\begin{align}\label{finalobj}
\Omega=\big\{(x,y)\in [-1,1]^2: \Phi(x)\leq y \leq -\Phi(-x) \big\}.
\end{align} 
Figure \ref{regionintro} depicts $\Omega_0$ and the function $\Phi$ (lower red line), the explicit form of 
$\Phi$ is given in eq. (\ref{Phim}) and eq. (\ref{Phi}).   
As a byproduct we get that the inequality by Durbin and Stuart is not sharp outside the aforementioned points 
$\boldsymbol{p_n}$ and $\boldsymbol{-p_n}$, 
that $\Omega$ is compact and simply connected, but not convex. Moreover, we prove the surprising fact that
for each point $(x,y) \in \Omega$ there exist mutually completely dependent random variables $X,Y$ for which
$(\tau(X,Y),\rho(X,Y))=(x,y)$ holds.  
\begin{figure}[H]
\centering
\makebox{\includegraphics[width=10cm]{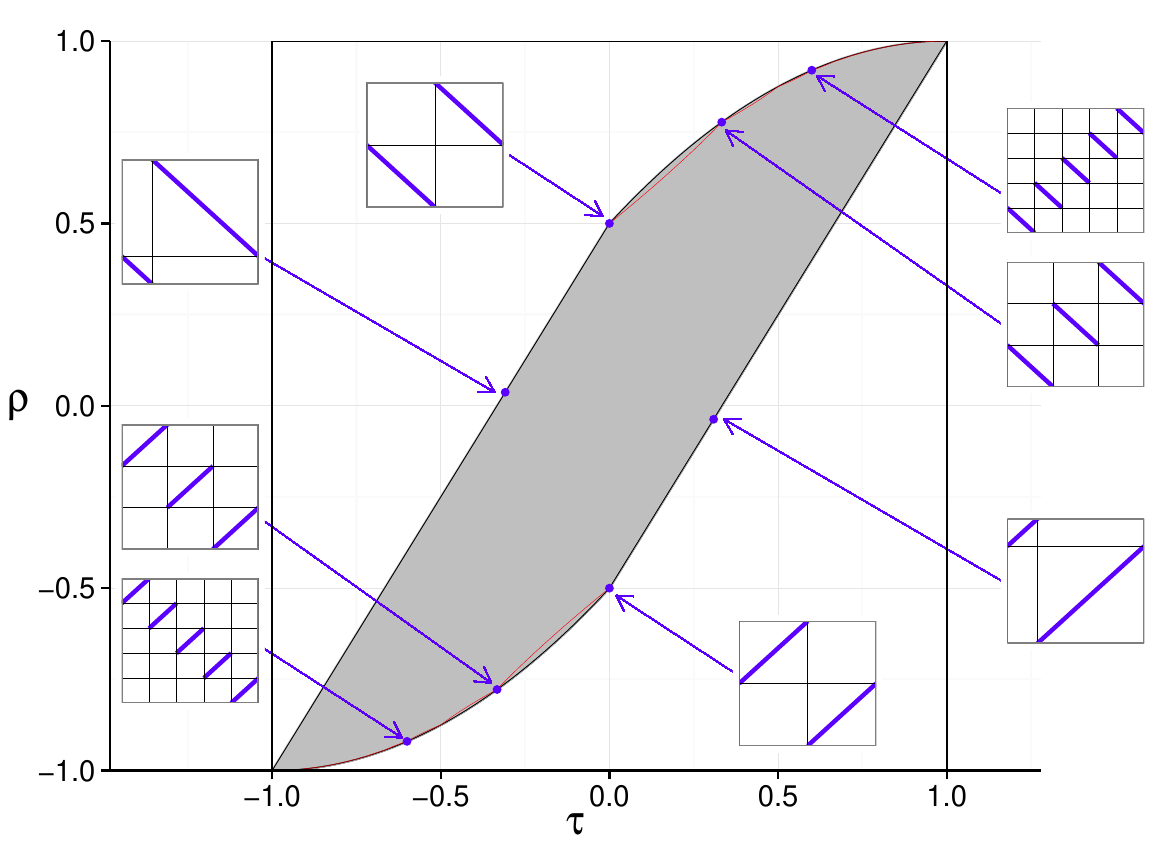}}\caption{The classical $\tau$-$\rho$-region $\Omega_0$ 
and some copulas (distributing mass uniformly on the blue segments)
for which the inequality by Durbin and Stuart is sharp. The red line depicts the true boundary of $\Omega$.}
\label{regionintro}
\end{figure}

The rest of the paper is organized as follows: Section 2 gathers some notations and preliminaries. In Section 
3 we reduce the problem of determining $\Omega$ to a problem about so-called shuffles of copulas, prove
some properties of shuffles and derive the function $\Phi$. The main result saying that 
$\Omega$ is contained in the right-hand-side of eq. (\ref{finalobj}) is given in Section 4, tedious 
calculations needed for the proofs being collected in the Appendix. Section 5 serves to 
prove equality in eq. (\ref{finalobj}) and to collect some interesting consequences of this result. Finally, in Section 6 we gather 
some new conjectures on the exact $\tau$-$\rho$ region for well-known subclasses of copulas.  
  
\section{Notation and Preliminaries}
As already mentioned before, $\kc$ will denote the family of all two-dimen\-sional \emph{copulas},
see \cite{DuS,Em,Ne}.
$M$ and $W$ will denote the upper and the lower Fr\'echet-Hoeffding bound respectively.  
Given $A\in \kc$ the \emph{transpose} $A^t \in \kc$ of $A$ is defined by $A^t(x,y):=A(y,x)$ for all $x,y \in [0,1]$.
$d_\infty$ will denote the uniform distance on $\kc$; it is well known that $(\kc,d_\infty)$ is a compact metric space and
that $d_\infty$ is a metrization of weak convergence in $\kc$. 
For every $A \in \kc$ the corresponding \emph{doubly stochastic measure} will be denoted by $\mu_A$, i.e. we have
$\mu_A([0,x]\times [0,y]):=A(x,y)$ for all $x,y\in [0,1]$. $\mathcal{P}_\mathcal{C}$ denotes the class
of all these doubly stochastic measures.
$\mathcal{B}([0,1])$ and $\mathcal{B}([0,1]^2)$ will denote the Borel $\sigma$-fields in $[0,1]$ and $[0,1]^2$, 
$\lambda$ and $\lambda_2$ the Lebesgue measure on $\mathcal{B}([0,1])$ and $\mathcal{B}([0,1]^2)$ respectively. 
Instead of $\lambda$-a.e. we will simply write a.e. since no confusion will arise. 
$\mathcal{T}$ will denote the class of all $\lambda$-preserving transformations $h:[0,1] \rightarrow [0,1]$, i.e.
transformations for which the push-forward $\lambda^h$ of $\lambda$ via $h$ coincides with $\lambda$,  
$\mathcal{T}_b$ the subclass of all bijective $h \in \mathcal{T}$. 

For every copula $A \in \kc$ there exists a \emph{Markov kernel} (regular conditional distribution) 
$K_A:[0,1] \times \mathcal{B}([0,1]) \rightarrow [0,1]$ fulfilling 
($G_x:=\{y \in [0,1]:(x,y) \in G\}$ denoting the $x$-section of $G \in \mathcal{B}([0,1]^2)$ for 
every $x \in [0,1]$)
\begin{equation}\label{bound}
 \int_{[0,1]} K_A(x,G_x)\, d\lambda(x) = \mu_A(G),
\end{equation}
for every $G \in \mathcal{B}([0,1]^2)$, so, in particular
 \begin{equation}\label{bound2}
 \int_{[0,1]} K_A(x,F)\, d\lambda(x) = \lambda(F)
\end{equation}
for every $F \in \mathcal{B}([0,1])$, see \cite{Tru}. We will refer to $K_A$ simply as Markov kernel of $A$.
On the other hand, every Markov kernel $K:[0,1]\times \mathcal{B}([0,1]) \rightarrow [0,1]$ fulfilling (\ref{bound2}) 
induces a unique element $\mu \in \pc([0,1]^2)$ via (\ref{bound}). 
For more details and properties of disintegration we refer to \citep{Ka,Kl}.\\
A copula $A\in \kc$ will be called \emph{completely dependent} if and only if there exists 
$h \in \mathcal{T}$ such that $K(x,E):=\mathbf{1}_E(h(x))$ is a Markov kernel of $A$ 
(see \cite{Tru} for equivalent definitions and main properties). 
For every $h \in \mathcal{T}$ the induced completely dependent copula will be denoted by $A_h$.
Note that $h_1=h_2$ a.e. implies $A_{h_1}=A_{h_2}$ and that eq. (\ref{bound}) implies 
$A_h(x,y)=\lambda([0,x] \cap h^{-1}([0,y]))$ for all $x,y \in [0,1]$. In the sequel $\kc_d$ will 
denote the family of all completely dependent copulas. 
$A_h \in \kc_d$ will be called \emph{mutually completely dependent} if we even have $h \in \mathcal{T}_b$. Note that
in case of $h \in \mathcal{T}_b$ we have $A_{h^{-1}}=(A_h)^t$.
Complete dependence is the opposite of independence since it describes the (not necessarily mutual) 
situation of full predictability/maximum dependence. 

Tackling the problem of determining the region $\Omega$, our main tool will be special  
members of the class $\kc_d$ usually
referred to as \emph{shuffles of} the minimum copula \emph{$M$}. 
Following \citep{Ne} we will call $h\in \mathcal{T}_b$ a
\emph{shuffle} (and $A_h \in \kc_d$ a shuffle of $M$) if there exist $0=s_0<s_1<\ldots < s_{n-1} < s_n=1$ and 
$\boldsymbol{\varepsilon}=(\varepsilon_1,\ldots,\varepsilon_n) \in \{-1,1\}^n$ such that we have 
$h'(x)=\varepsilon_i$ for every $x \in (s_{i-1},s_i)$. In case of $\varepsilon_i=1$ for every $i \in \{1,\ldots,n\}$ we will
call $h$ \emph{straight shuffle}. $\mathcal{S}$ will denote the family of all shuffles, 
$\mathcal{S}^+$ the family of all straight shuffles. It is well known \citep{Mish,Ne} that 
$\kc_{\mathcal{S}^+}$, defined by
\begin{align}
\kc_{\mathcal{S}^+}=\big\{A_h: \,h\in \mathcal{S}^+\big\}
\end{align}
is dense in $(\kc,d_\infty)$. For more general definitions of shuffles we refer 
to \citep{DuS}.  
Obviously every shuffle $h \in \mathcal{S}$ can be expressed in terms
of vectors $\boldsymbol{u} \in \Delta_n, \boldsymbol{\varepsilon} \in \{-1,1\}^n$ and a permutation 
$\boldsymbol{\pi} \in \sigma_n$, where 
$\Delta_n$ denotes the unit simplex $\Delta_n=\{\boldsymbol{x} \in [0,1]^n: \sum_{i=1}^n x_i=1\}$ and $\sigma_n$ denotes
all bijections on $\{1,\ldots,n\}$. In fact, choosing suitable
$\boldsymbol{u} \in \Delta_n, \boldsymbol{\varepsilon} \in \{-1,1\}^n, \boldsymbol{\pi} \in \sigma_n$, setting 
(empty sums are zero by definition)
\begin{eqnarray}
s_k&:=&\sum_{i=1}^k u_i, \qquad t_k:=\sum_{i=1}^k u_{\boldsymbol{\pi}^{-1}(i)}
\end{eqnarray}
for every $k \in \{0,\ldots,n\}$, we have $s_k-s_{k-1}=u_k=t_{\boldsymbol{\pi}(k)}-t_{\boldsymbol{\pi}(k)-1}$ 
and on $(s_{k-1},s_k)$ the shuffle $h$ is given by   
\begin{equation}\label{defshuffle}
h(x) = h_{\boldsymbol{\pi},\boldsymbol{u},\boldsymbol{\varepsilon}}(x):= \left\{
\begin{array}{ll}
t_{\boldsymbol{\pi}(k)-1} + x - s_{k-1} & \text{if } \varepsilon_k=1,\\
t_{\boldsymbol{\pi}(k)} - (x-s_{k-1}) & \text{if } \varepsilon_k=-1.
\end{array} \right.
\end{equation}
In the sequel we will work directly with the function $h_{\boldsymbol{\pi},\boldsymbol{u},\boldsymbol{\varepsilon}}$, implicitly defined in eq. (\ref{defshuffle})
since all possible extensions of $h_{\boldsymbol{\pi},\boldsymbol{u},\boldsymbol{\varepsilon}}$ from $\bigcup_{i=1}^k(s_{k-1},s_k)$ to $[0,1]$ yield the same copula,
which we will denote by $A_{h_{\boldsymbol{\pi},\boldsymbol{u},\boldsymbol{\varepsilon}}}$. In case of $\varepsilon_i=1$ for every $i \in \{1,\ldots,n\}$ 
we will simply write $h_{\boldsymbol{\pi},\boldsymbol{u}}$ in the sequel. Note that the chosen representation is not unique, i.e. for given
$\boldsymbol{u} \in \Delta_n, \boldsymbol{\varepsilon} \in \{-1,1\}^n, \boldsymbol{\pi} \in \sigma_n$ there always exist 
$\boldsymbol{u}' \in \Delta_m, \boldsymbol{\varepsilon}' \in \{-1,1\}^m, \boldsymbol{\pi}' \in \sigma_m$ with $m \not = n$ such that 
$h_{\boldsymbol{\pi},\boldsymbol{u},\boldsymbol{\varepsilon}}=h_{\boldsymbol{\pi}',\boldsymbol{u}',\boldsymbol{\varepsilon}'}$ a.e., 
implying $A_{h_{\boldsymbol{\pi},\boldsymbol{u},\boldsymbol{\varepsilon}}}=A_{h_{\boldsymbol{\pi}',\boldsymbol{u}',\boldsymbol{\varepsilon}'}}$. 
So, for instance, the shuffle $h_{\boldsymbol{\pi},\boldsymbol{u},\boldsymbol{\varepsilon}}$ with 
$\boldsymbol{\pi}=(4,2,1,3), \boldsymbol{u}=(\frac{1}{8},\frac{3}{8},\frac{1}{4},\frac{1}{4})$ and 
$\boldsymbol{\varepsilon}=(1,-1,1,1)$ 
and the shuffle
$h_{\boldsymbol{\pi}',\boldsymbol{u}',\boldsymbol{\varepsilon}'}$ with 
$\boldsymbol{\pi}'=(5,3,1,2,4), \boldsymbol{u}'=(\frac{1}{8},\frac{3}{8},\frac{1}{8},\frac{1}{8},\frac{1}{4})$ 
and $\boldsymbol{\varepsilon}'=(1,-1,1,1,1)$ coincide a.e. and induce the same copula. 

\begin{rmk}   
\emph{It might seem more natural to work directly with minimal representations (minimal dimension $n$) and 
to exclude the case of $u_k=0$ for some $k$ (implying $(s_{k-1},s_k)=\emptyset$) in the first place -- since we 
will, however, use various compactness arguments in the sequel the chosen representation is more convenient.}
\end{rmk}

\section{Basic properties of $\Omega$ and some results on shuffles}
In this section we will first show that for determining $\Omega$ it is sufficient to consider straight shuffles,
give explicit formulas for $(\tau(A_h),\rho(A_h))$ for arbitrary $h \in \mathcal{S}^+$, and derive a strictly increasing  
function $\Phi:[-1,1] \rightarrow [-1,1]$ which, after some change of coordinates, will finally be shown to 
determine $\Omega$ in the subsequent section. \\  
We start with some observations about $\Omega$. Considering that the mapping $f:\kc \rightarrow [-1,1]^2$, defined by
$f(A)=(\tau(A),\rho(A))$, is continuous w.r.t. $d_\infty$ \citep{Sca}, the compactness 
of $(\kc,d_\infty)$ implies the compactness of $\Omega$. As a consequence, using eq. (\ref{defregionall}) and the fact that 
$\kc_{\mathcal{S}^+}$ is dense we immediately get 
($\overline{U}$ denoting the closure of a set $U$) 
\begin{equation}\label{omegashuffle}
\Omega=\overline{\big\{(\tau(A_h),\rho(A_h)): \, h \in \mathcal{S}^+ \big\}}.
\end{equation}
Based on this, our method of proof will be to construct a compact set $\Omega_\Phi$ (fully determined by the function $\Phi$) 
fulfilling $(\tau(A_h),\rho(A_h)) \in \Omega_\Phi$ for every $h \in \mathcal{S}^+$ since we then 
automatically get $\Omega \subseteq \Omega_\Phi$.  

Being concordance measures, $\tau$ and $\rho$ fulfil the axioms mentioned in \cite{Sca}, so $\Omega$ is also symmetric w.r.t. $(0,0)$.  
Analogously, it is straightforward to verify that 
$\tau(A^t)=\tau(A)$ as well as $\rho(A^t)=\rho(A)$ holds for every $A \in \kc$, implying
\begin{equation}\label{transposeinvar}
\tau(A_{h^{-1}})=\tau(A_h), \quad \rho(A_{h^{-1}})=\rho(A_h)
\end{equation}  
for every $h \in \mathcal{T}_b$. 

For every $h \in \mathcal{T}$ define the quantities $\inv(h)$ and $\invs(h)$ (notation loosely based 
on \cite{SaUl}) by
\begin{eqnarray}
\inv(h) &=& \int_{[0,1]^2} \mathbf{1}_{[0,x)}(y) \mathbf{1}_{(h(x),1]}(h(y)) \,d\lambda_2(x,y) \label{definv} \\
\invs(h) &=& \int_{[0,1]^2} \mathbf{1}_{[0,x)}(y) \mathbf{1}_{(h(x),1]}(h(y))(x-y) \,d\lambda_2(x,y). \label{definvsum} 
\end{eqnarray}
\begin{lem}\label{trafo1}
For every $h \in \mathcal{T}_b$ the following relations hold:
\begin{eqnarray*}
\tau(A_h)&=& 4 \int_{[0,1]} A_h(x,h(x)) \,d\lambda(x) -1 = 1 - 4 \inv(h) \label{invtau}\\
\rho(A_h)&=& 12 \int_{[0,1]} xh(x) \,d \lambda(x) -3  = 1 - 12 \invs(h) \label{invsumrho}
\end{eqnarray*}
Moreover, for every $h \in \mathcal{T}_b$ we have 
$(\inv(h),\invs(h)) \in \big[0,\frac{1}{2}\big]\times \big[0,\frac{1}{6}\big]$.
\end{lem}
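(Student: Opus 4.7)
I would split the claim into three blocks: the first equality in each of the two lines (integrals over the graph of $h$), the second equality in each line (relating these integrals to $\inv(h)$, $\invs(h)$), and the bounds. The first equalities fall out of the kernel representation: for $h\in\mathcal{T}_b$ the Markov kernel of $A_h$ is $K(x,\cdot)=\delta_{h(x)}$, so disintegration collapses the double integrals in \eqref{deftau} and \eqref{defrho} to $\int A_h(x,h(x))\,d\lambda(x)$ and $\int x\,h(x)\,d\lambda(x)$ respectively. The real content is the second equality in each line.

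\textbf{The $\tau$ identity.} I would expand $A_h(x,h(x)) = \lambda(\{y\leq x : h(y)\leq h(x)\})$ and apply Fubini, obtaining
\[
\int_{[0,1]} A_h(x,h(x))\,d\lambda(x) = \int_{[0,1]^2} \mathbf{1}_{y\leq x,\,h(y)\leq h(x)}\,d\lambda_2(x,y).
\]
Because $h$ is a bijection, both $\{y=x\}$ and $\{y\neq x,\,h(y)=h(x)\}$ are $\lambda_2$-null, so this integral equals the measure $C$ of the concordant-pair set $\{y<x,\,h(y)<h(x)\}$. Combined with $C+\inv(h)=\lambda_2(\{y<x\})=1/2$ this yields $\tau(A_h)=4C-1=1-4\inv(h)$.

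\textbf{The $\rho$ identity.} I would start from
\[
x\,h(x) = \int_{[0,1]^2} \mathbf{1}_{[0,x]}(u)\,\mathbf{1}_{[0,h(x)]}(v)\,du\,dv
\]
and, using measure preservation of $h$, substitute $v=h(w)$ to rewrite the right-hand side as $\iint \mathbf{1}_{u\leq x,\,h(w)\leq h(x)}\,du\,dw$. Integrating over $x$ produces a triple integral, which I split along $\{u\leq w\}$ and $\{u>w\}$; integrating out $u$ first in each piece gives $\int x\,h(x)\,d\lambda = I_1+I_2$ where
\begin{align*}
I_2 &= \iint \min(w,x)\,\mathbf{1}_{h(w)\leq h(x)}\,d\lambda_2(w,x), \\
I_1 &= \iint \mathbf{1}_{w<x,\,h(w)\leq h(x)}(x-w)\,d\lambda_2(w,x).
\end{align*}
Splitting $I_2$ along $\{w\leq x\}/\{w>x\}$, renaming $w\leftrightarrow x$ in the second piece and again using bijectivity of $h$ (so $\{h(w)=h(x)\}$ is $\lambda_2$-null), collapses it to $\iint \mathbf{1}_{w\leq x}\,w\,d\lambda_2 = \int_0^1 w(1-w)\,dw = 1/6$. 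For $I_1$ the complementary identity
\[
I_1 + \invs(h) = \iint \mathbf{1}_{w<x}(x-w)\,d\lambda_2 = \tfrac{1}{6}
\]
gives $I_1 = 1/6 - \invs(h)$, hence $\int x\,h(x)\,d\lambda = 1/3 - \invs(h)$ and therefore $\rho(A_h) = 12(1/3 - \invs(h)) - 3 = 1 - 12\invs(h)$.

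\textbf{Bounds and main obstacle.} Non-negativity of $\inv(h)$ and $\invs(h)$ is immediate from the integrand. Upper bounds come from dropping the $h$-factor: $\inv(h)\leq \iint \mathbf{1}_{y<x}\,d\lambda_2 = 1/2$ and $\invs(h)\leq \iint \mathbf{1}_{y<x}(x-y)\,d\lambda_2 = 1/6$. The only genuinely nonroutine point is the $\rho$-computation, specifically spotting that the substitution $v=h(w)$ together with the $\{u\leq w\}/\{u>w\}$ split gives precisely the complementary decomposition of $\iint \mathbf{1}_{w<x}(x-w)\,d\lambda_2$ that makes $\invs(h)$ appear; everything else is bookkeeping with Fubini and the measure-preserving bijectivity of $h$.
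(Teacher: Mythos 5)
Your proof is correct. For the $\tau$ identity you and the paper do essentially the same thing: disintegrate with the kernel $K(x,\cdot)=\delta_{h(x)}$, identify $A_h(x,h(x))$ with the measure of the concordant set via Fubini, and use that concordant and discordant pairs partition $\{y<x\}$ up to a $\lambda_2$-null set because $h$ is injective. For the $\rho$ identity the paper takes a slightly different route: it splits the weight $(x-y)$ in the definition of $\invs(h)$ into its two summands and evaluates each section integral in terms of $A_h(\cdot,h(\cdot))$ and $h$ (namely $\int_{[0,x)}\mathbf{1}_{(h(x),1]}(h(y))\,d\lambda(y)=x-A_h(x,h(x))$ and $\int_{(y,1]}\mathbf{1}_{[0,h(y))}(h(x))\,d\lambda(x)=h(y)-A_h(y,h(y))$), arriving at $\invs(h)=\int x(x-h(x))\,d\lambda(x)=\tfrac{1}{3}-\int xh(x)\,d\lambda(x)$; you instead write $xh(x)$ as an area integral, pull the second coordinate back through $h$ by measure preservation, and split the resulting triple integral along $\{u\le w\}$. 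Both arguments are two applications of Fubini plus $\lambda^h=\lambda$ and land on the same intermediate identity $\int xh(x)\,d\lambda(x)=\tfrac{1}{3}-\invs(h)$; yours trades the paper's section-measure identities for an extra auxiliary variable, and neither is materially shorter. Where you genuinely diverge is the last claim: the paper obtains $(\inv(h),\invs(h))\in[0,\tfrac{1}{2}]\times[0,\tfrac{1}{6}]$ by feeding the two identities back into $\tau,\rho\in[-1,1]$, whereas you bound the defining integrals directly by dropping the $h$-indicator. Your version is self-contained and arguably cleaner, since it does not presuppose the range of $\tau$ and $\rho$.
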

\begin{proof}
Using disintegration we immediately get
\begin{eqnarray*}
\tau(A_h)&=&4\int_{[0,1]} \int_{[0,1]} A_h(x,y) K_{A_h}(x,dy)\,d \lambda(x)-1  4 \int_{[0,1]} A_h(x,h(x)) \,d\lambda(x) -1
\end{eqnarray*}
as well as 
\begin{eqnarray*}
\inv(h)&=& \int_{[0,1]}  d\lambda(x) \int_{[0,x]} \Big( 1- \mathbf{1}_{[0,h(x)]}(h(y)) \Big) \,d\lambda(y) \\
         &=& \int_{[0,1]} \big(x - A_h(x,h(x)) \big) \,d\lambda(x) = \frac{1-\tau(A_h)}{4}
\end{eqnarray*}
which proves the first identity. The first part of the second one is an immediate consequence of disintegration. To prove
the remaining equality use $\int_{[0,x)} \mathbf{1}_{(h(x),1]} (h(y))\, d\lambda(y)= x-A_h(x,h(x))$ and
$\int_{(y,1]} \mathbf{1}_{[0,h(y))} (h(x))\, d\lambda(x)= h(y)-A_h(y,h(y))$ to get
\begin{eqnarray*}
\invs(h)&=& \int_{[0,1]} x \big(x- A_h(x,h(x)) -(h(x)-A_h(x,h(x))  \big)\,d\lambda(x) \\
          &=& \frac{1}{3} - \int_{[0,1]} xh(x)\,d\lambda(x).
\end{eqnarray*} 
The fact that $(\inv(h),\invs(h)) \in \big[0,\frac{1}{2}\big]\times \big[0,\frac{1}{6}\big]$ is a direct
consequence of $\Omega \subseteq [-1,1]^2$.
\end{proof}
As next step we derive explicit formulas for $\inv(h)$ and $\invs(h)$ for the case of $h$ being a straight shuffle 
based on which we will afterwards derive 
the afore-mentioned function $\Phi$ determining the region $\Omega$. To simplify notation define
\begin{eqnarray}\label{defIQ}
I_{\boldsymbol{\pi}} &=& \big\{\{i,j\}: \, 1 \le i < j \le n, \, \boldsymbol{\pi}(i) > \boldsymbol{\pi}(j)\big\} \nonumber \\
Q_{\boldsymbol{\pi}} &=& \big\{\{i,j,k\}: \, 1 \le i < j < k \le n, \, \boldsymbol{\pi}(i) > \boldsymbol{\pi}(j) > \boldsymbol{\pi}(k) \textrm{ or } \\
  & & \qquad \boldsymbol{\pi}(j) > \boldsymbol{\pi}(k) > \boldsymbol{\pi}(i)  \textrm{ or } \boldsymbol{\pi}(k) > \boldsymbol{\pi}(i) > \boldsymbol{\pi}(j) \big\}, \nonumber 
\end{eqnarray}
as well as 
\begin{equation}\label{defab}
a_{\boldsymbol{\pi}}(\boldsymbol{u})= \inv(h_{\boldsymbol{\pi},\boldsymbol{u}}), \qquad b_{\boldsymbol{\pi}}(\boldsymbol{u})= \inv(h_{\boldsymbol{\pi}, \boldsymbol{u}}) - 2\invs(h_{\boldsymbol{\pi}, \boldsymbol{u}})
\end{equation}
for every $\boldsymbol{\pi} \in \sigma_n$ and $\boldsymbol{u} \in \Delta_n$. The following lemma (the proof of which is given in the Appendix) holds.
\begin{lem}\label{invexplicit}
For every $(\boldsymbol{\pi},\boldsymbol{u}) \in \sigma_n\times\Delta_n$ the following identities hold: 
\begin{eqnarray*}
\inv(h_{\boldsymbol{\pi}, \boldsymbol{u}}) &=& a_{\boldsymbol{\pi}}(\boldsymbol{u}) = \sum_{i<j, \, \{i,j\} \in I_{\boldsymbol{\pi}}} u_i u_j \\
\invs(h_{\boldsymbol{\pi}, \boldsymbol{u}}) &=& \sum_{i<j, \,\{i,j\} \in I_{\boldsymbol{\pi}}} \left(\frac{1}{2} u_i^2 u_j + 
\frac{1}{2} u_i u_j^2 + \sum_{k: \, i<k<j} u_i u_j u_k \right) \\
b_{\boldsymbol{\pi}}(\boldsymbol{u}) &=& \sum_{i<j<k, \, \{i,j,k\} \in Q_{\boldsymbol{\pi}}} u_i u_j u_k
\end{eqnarray*}
\end{lem}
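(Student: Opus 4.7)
My plan is to exploit the block structure of the straight shuffle: partitioning $[0,1]=\bigsqcup_{k=1}^n \overline{I_k}$ with $I_k=(s_{k-1},s_k)$, the map $h_{\pi,u}$ is an orientation-preserving translation sending $I_k$ onto $J_{\pi(k)}:=(t_{\pi(k)-1},t_{\pi(k)})$. All three identities will then drop out by decomposing the defining integrals over the product rectangles $I_i\times I_j$ (resp.\ cubes $I_i\times I_j\times I_k$) and observing that the indicators become constants on each piece.

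For $\inv(h_{\pi,u})$, I would split the integral in \eqref{definv} over $I_j\times I_i$ (with $x\in I_j$, $y\in I_i$). If $i=j$, the integrand vanishes since $h$ is increasing on $I_k$. If $i\neq j$, both indicators are constant on the rectangle: $\mathbf 1_{y<x}$ is $1$ iff $i<j$, and $\mathbf 1_{h(y)>h(x)}$ is $1$ iff $\pi(i)>\pi(j)$. The surviving rectangles are precisely those with $\{i,j\}\in I_\pi$, each contributing $u_iu_j$. For $\invs(h_{\pi,u})$ the same case analysis applies, only now one must integrate the weight $(x-y)$ over the surviving rectangle: a short computation gives
\[
\int_{I_j}\!\!\int_{I_i}(x-y)\,d\lambda(y)d\lambda(x)=\frac{u_iu_j}{2}\bigl(s_j+s_{j-1}-s_i-s_{i-1}\bigr)=\tfrac12 u_i^2u_j+\tfrac12 u_iu_j^2+u_iu_j\!\!\sum_{i<k<j}\!\!u_k,
\]
which is precisely the claimed summand.

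For $b_\pi(u)$ the cleanest route is to write $b_\pi(u)=\int_{[0,1]^2}\mathbf 1_{y<x}\mathbf 1_{h(y)>h(x)}\bigl(1-2(x-y)\bigr)d\lambda_2(x,y)$ and use the identity $1-2(x-y)=\int_{[0,1]}(\mathbf 1_{z<y}+\mathbf 1_{z>x}-\mathbf 1_{y<z<x})\,d\lambda(z)$ to lift to a triple integral. After relabeling the variables as an ordered triple $a<b<c$, this becomes
\[
b_\pi(u)=\int_{\{a<b<c\}}\Bigl(\mathbf 1_{h(b)>h(c)}+\mathbf 1_{h(a)>h(b)}-\mathbf 1_{h(a)>h(c)}\Bigr)d\lambda_3(a,b,c).
\]
Splitting the domain along $I_{k_1}\times I_{k_2}\times I_{k_3}$ with $k_1\leq k_2\leq k_3$, whenever two indices coincide the bracketed expression telescopes to zero (the two indicators sharing that block cancel the third). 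When $k_1<k_2<k_3$ are all distinct, the bracket depends only on the relative order of $\pi(k_1),\pi(k_2),\pi(k_3)$, and the contribution of the block is $u_{k_1}u_{k_2}u_{k_3}$ times that bracket.

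The main obstacle is the final combinatorial matching: I need to check that for a triple $k_1<k_2<k_3$ the value $[\pi(k_2)>\pi(k_3)]+[\pi(k_1)>\pi(k_2)]-[\pi(k_1)>\pi(k_3)]\in\{0,1\}$ equals $1$ exactly on the three cyclic descent patterns defining $Q_\pi$. This is a finite case check over the $3!=6$ orderings of $(\pi(k_1),\pi(k_2),\pi(k_3))$: the monotone orders ($\pi(k_1)<\pi(k_2)<\pi(k_3)$ and its reverse) and the two ``valley/peak'' orders give $0$, while the three ``rotations'' of a strict descent pattern give $1$. Once this is verified the claimed sum over $Q_\pi$ falls out, completing the proof.
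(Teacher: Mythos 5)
Your treatment of the first two identities is essentially the paper's own argument: decompose the defining integrals over the rectangles $I_i\times I_j$, observe that both indicators are constant there, and integrate the weight $(x-y)$ explicitly (the paper organizes this via $s_j^2-s_{j-1}^2=u_j(2\sum_{k<j}u_k+u_j)$, which is the same computation as your $\tfrac{u_iu_j}{2}(s_j+s_{j-1}-s_i-s_{i-1})$). For $b_\pi$ you take a genuinely different route: the paper never returns to an integral, but simply substitutes the first two formulas into $b_\pi=\inv(h_{\pi,u})-2\invs(h_{\pi,u})$, cancels the $u_i^2u_j$ and $u_iu_j^2$ terms, and regroups the triple sums; you instead lift $1-2(x-y)$ to a triple integral over $a<b<c$ and let the blocks with repeated indices telescope to zero. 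Both arguments funnel into the identical final step, namely that
$\mathbf 1_{\pi(k_2)>\pi(k_3)}+\mathbf 1_{\pi(k_1)>\pi(k_2)}-\mathbf 1_{\pi(k_1)>\pi(k_3)}$
equals $1$ exactly on the three patterns defining $Q_\pi$ and $0$ otherwise; the paper's displayed chain of signed sums is literally this identity, so neither route is shorter, though yours makes the probabilistic origin of the cancellation more visible.

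One slip in your sketch of the six-case check: you place the strictly decreasing order $\pi(k_1)>\pi(k_2)>\pi(k_3)$ among the cases giving $0$ (as ``the reverse'' of the monotone order), but it gives $1+1-1=1$ and is in fact the first of the three patterns listed in the definition of $Q_\pi$; your enumeration also tallies seven cases for six orderings. Since the conclusion you actually use (the bracket is $1$ precisely on the three cyclic rotations of a descent) is correct, this is a bookkeeping error in the summary of a trivial verification, not a gap in the argument.
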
  
\begin{rmk}
\emph{Notice that in \citep[Propositions 4 and 5]{Ge0} a slightly different notation 
($h_{i,j}$, $\alpha_{i,j}$ and $\beta_{i,j}$ instead of $u_p$, $s_{p-1}$, $t_{\pi(p)-1}$) is used to derive analogous formulas.
Lemma \ref{invexplicit} and Theorem \ref{mainresalt} are the main reason for our choice of notation in this paper: the expressions for $a_{\boldsymbol{\pi}}(\boldsymbol{u})$ and 
$b_{\boldsymbol{\pi}}(\boldsymbol{u})$, which are key in the proof of the main result, are simplest possible.
}
\end{rmk}
As pointed out in the Introduction, the first part of inequality (\ref{ineqdurbinstuart}) is known to be sharp only 
at the points
$\boldsymbol{p}_n=(-1+\frac{2}{n},-1+\frac{2}{n^2})$ with $n \geq 2$. According to \citep{Ne}, or directly using 
Lemma \ref{invexplicit}, considering 
$\boldsymbol{\pi}=(n,n-1,\ldots,2,1)$ and $u_1=u_2=\ldots=u_n=\frac{1}{n}$ we get 
$\boldsymbol{p}_n=(\tau(A_{h_{\boldsymbol{\pi},\boldsymbol{u}}}),\rho(A_{h_{\boldsymbol{\pi},\boldsymbol{u}}}))$. 
Having this, it seems natural to conjecture that all shuffles of the form
$A_{h_{\boldsymbol{\pi},\boldsymbol{u}}}$ with  
$$
\boldsymbol{\pi}=(n,n-1,\ldots,2,1), \qquad  u_1=u_2=\ldots=u_{n-1}=r,\, u_n=1-(n-1)r
$$
for some $n \geq 2$ and 
$r \in (\frac{1}{n},\frac{1}{n-1})$ might also be extremal in the sense that $(\tau(A_{h_{\boldsymbol{\pi},\boldsymbol{u}}}),\rho(A_{h_{\boldsymbol{\pi},\boldsymbol{u}}}))$ 
is a boundary point of $\Omega$. The Main content of this paper is the confirmation of this very conjecture.
We will assign all shuffles of the just mentioned form the name prototype, calculate $\tau$ and $\rho$ explicitly for all 
prototypes and then, based on these values, derive the function $\Phi$. 
\begin{defin}\label{proto}
$\boldsymbol{\pi} \in \sigma_n$ will be called \emph{decreasing} if $\boldsymbol{\pi}=(n,n-1,\ldots,2,1)$.
The pair $(\boldsymbol{\pi},\boldsymbol{u}) \in \sigma_n \times \Delta_n$ will be called a \emph{prototype} if $\boldsymbol{\pi}$ is decreasing and 
there exists some $r \in [\frac{1}{n},\frac{1}{n-1}]$ such that
$u_1=u_2=\ldots=u_{n-1}=r$ and $u_n=1-(n-1)r$.   
Analogously, $h \in \mathcal{S}^+$ (and $A_h \in \kc_d$) 
is called a \emph{prototype} if there exists a prototype $(\boldsymbol{\pi},\boldsymbol{u})$ such that $h=h_{\boldsymbol{\pi},\boldsymbol{u}}$ a.e.
\end{defin} 
Using the identities from Lemma \ref{invexplicit} we get the following expressions for prototypes (the proof is
given in the Appendix):
\begin{lem}\label{formulaproto}
Suppose that $(\boldsymbol{\pi},\boldsymbol{u})\in \sigma_n \times \Delta_n$ is a prototype, then 
\begin{eqnarray*}
\tau(A_{h_{\boldsymbol{\pi}, \boldsymbol{u}}}) &=& 1 - 4(n-1)r + 2r^2n(n-1) \in \big[\tfrac{2-n}{n},\tfrac{2-(n-1)}{n-1}\big] \\
\rho(A_{h_{\boldsymbol{\pi}, \boldsymbol{u}}}) &=& 1 - 2r(n-1)\big(3 - 3r(n-1) + r^2(n-2)n\big) \in \big[\tfrac{2-n^2}{n^2},\tfrac{2-(n-1)^2}{(n-1)^2}\big].
\end{eqnarray*}
\end{lem}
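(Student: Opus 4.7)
The plan is to specialize Lemma \ref{invexplicit} to the decreasing permutation and then substitute the prototype weights.

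First I would note that if $\pi=(n,n-1,\ldots,2,1)$ then $\pi$ reverses the natural order, so $\pi(i)>\pi(j)$ is equivalent to $i<j$; consequently
\[
I_\pi=\{\{i,j\}:1\le i<j\le n\},\qquad Q_\pi=\{\{i,j,k\}:1\le i<j<k\le n\},
\]
and every pair/triple contributes to the sums in Lemma \ref{invexplicit}. I would then split each sum according to whether or not the index $n$ is involved, using that $u_1=\cdots=u_{n-1}=r$ and $u_n=s:=1-(n-1)r$. The counting reduces to the three binomials $\binom{n-1}{2}$, $n-1$ (for pairs), and $\binom{n-1}{3}$, $\binom{n-1}{2}$ (for triples).

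To get $\invs(h_{\pi,u})$ I would not compute the triple-index expression in Lemma \ref{invexplicit} directly. Instead I would use the defining identity (\ref{defab}), i.e.\ $\invs=(\inv-b_\pi)/2$, since both $\inv$ and $b_\pi$ have single-term summands that specialize very cleanly. Concretely,
\begin{align*}
\inv(h_{\pi,u})&=\tbinom{n-1}{2}r^2+(n-1)rs,\\
b_\pi(u)&=\tbinom{n-1}{3}r^3+\tbinom{n-1}{2}r^2 s.
\end{align*}
Eliminating $s$ and simplifying produces the polynomials
\[
\inv(h_{\pi,u})=(n-1)r-\tfrac{n(n-1)}{2}r^2,\qquad
\invs(h_{\pi,u})=\tfrac{n-1}{2}r-\tfrac{(n-1)^2}{2}r^2+\tfrac{n(n-1)(n-2)}{6}r^3,
\]
after which Lemma \ref{trafo1} gives the stated closed forms for $\tau(A_{h_{\pi,u}})$ and $\rho(A_{h_{\pi,u}})$.

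For the interval claims I would check that both expressions are monotone in $r$ on $[\tfrac{1}{n},\tfrac{1}{n-1}]$ and then evaluate at the endpoints. Differentiation gives $\tfrac{d\tau}{dr}=4(n-1)(nr-1)\ge 0$ on the interval, and $\tfrac{d\rho}{dr}=-6(n-1)\bigl(n(n-2)r^2-2(n-1)r+1\bigr)$. The quadratic factor has roots $\tfrac{1}{n}$ and $\tfrac{1}{n-2}$ (one verifies the discriminant equals $4$), hence it is non-positive on $[\tfrac{1}{n},\tfrac{1}{n-1}]\subset[\tfrac{1}{n},\tfrac{1}{n-2}]$, so $\rho$ is non-decreasing as well. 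A direct evaluation at $r=\tfrac{1}{n}$ yields $(\tau,\rho)=\bigl(\tfrac{2-n}{n},\tfrac{2-n^2}{n^2}\bigr)$, and at $r=\tfrac{1}{n-1}$ (where $u_n=0$, which collapses the prototype to the $(n-1)$-dimensional one) yields $\bigl(\tfrac{2-(n-1)}{n-1},\tfrac{2-(n-1)^2}{(n-1)^2}\bigr)$, proving the ranges.

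The whole argument is conceptually routine; the only mild obstacle is the bookkeeping in the algebraic simplifications, which is why the detour through $b_\pi=\inv-2\,\invs$ is worthwhile: it turns two nested sums into single products and keeps every term in the shape $r^k$ with $k\le 3$.
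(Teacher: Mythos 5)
Your proposal is correct and follows essentially the same route as the paper: both specialize Lemma \ref{invexplicit} to the decreasing permutation (so that $\inv(h_{\pi,u})=e_2(u)$ and $b_\pi(u)=e_3(u)$), recover $\invs$ from $b_\pi=\inv-2\invs$, and substitute the prototype weights, the paper merely streamlining the algebra via the power-sum identities $e_2(u)=\tfrac12\bigl(1-\sum_iu_i^2\bigr)$ and $\tfrac12(e_2(u)-e_3(u))=\tfrac16-\tfrac16\sum_iu_i^3$, which also makes the endpoint values of the two ranges immediate. The only nit is that your root argument for the quadratic factor of $d\rho/dr$ presupposes $n\ge 3$ (for $n=2$ that factor degenerates to the linear $1-2r$, and monotonicity on $[\tfrac12,1]$ is trivial), an edge case worth one line.
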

Fix $n\geq 2$. Then both functions $r \mapsto 1 - 4(n-1)r + 2r^2n(n-1)$ and 
$r \mapsto 1 - 2r(n-1)\big(3 - 3r(n-1) + r^2(n-2)n\big)$ are strictly increasing on $[\frac{1}{n},\frac{1}{n-1}]$. 
Expressing $r$ as function of $\tau$ and substituting the result in the expression for $\rho$ directly yields 
\begin{equation*}\label{Phim}
\rho(A_{h_{\boldsymbol{\pi},\boldsymbol{u}}}) = -1 - \frac{4}{n^2} + \frac{3}{n} + \frac{3\tau(A_{h_{\boldsymbol{\pi},\boldsymbol{u}}})}{n} 
  - \frac{n-2}{\sqrt{2} n^2 \sqrt{n-1} }(n - 2 + n \tau(A_{h_{\boldsymbol{\pi},\boldsymbol{u}}}))^{3/2}.\\
\end{equation*}
Based on this interrelation define $\Phi_n:[-1+\tfrac{2}{n},1] \rightarrow [-1,1]$ by
\begin{equation}\label{Phim}
\Phi_n(x)=-1 - \frac{4}{n^2} + \frac{3}{n} + \frac{3x}{n} 
  - \frac{n-2}{\sqrt{2} n^2 \sqrt{n-1} }(n - 2 + n x)^{3/2}
\end{equation}
and set 
\begin{equation}\label{Phi}
\Phi(x) = \left\{
\begin{array}{ll}
-1 & \text{if } x = -1,\\
\Phi_n(x) & \text{if } x \in  \Big[\tfrac{2-n}{n},\tfrac{2-(n-1)}{n-1}\Big] \text{ for some } n\geq 2.
\end{array} \right.
\end{equation}
Since we have $\Phi_n(\tfrac{2-n}{n})=\Phi_{n+1}(\tfrac{2-n}{n})=-1+\tfrac{2}{n^2}$ for every $n \geq 1$ this 
defines a function $\Phi:[-1,1] \rightarrow [-1,1]$. Notice that $\Phi_2(x)=-\tfrac{1}{2} + \tfrac{3x}{2}$, i.e.
on $[0,1]$ $\Phi$ coincides with Daniels' linear bound and for $x_n=\tfrac{2-n}{n}$ and $n \ge 2$ 
we have $(x_n,\Phi(x_n))=\boldsymbol{p}_n$, i.e. $(x_n,\Phi(x_n))$ coincides with the points 
at which Durbin and Stuart's inequality is known to be sharp. 
Furthermore, it is straightforward to 
verify that $\Phi$ is a strictly increasing homeomorphism on $[-1,1]$ which is concave on every interval 
$[\tfrac{2-n}{n},\tfrac{2-(n-1)}{n-1}]$ with $n \geq 2$. Figure \ref{region_locale} depicts the function $\Phi$ as
well as some prototypes and their corresponding Kendall's $\tau$ and Spearman's $\rho$.  
\begin{figure}[h!]
\centering
\makebox{\includegraphics[width=8cm]{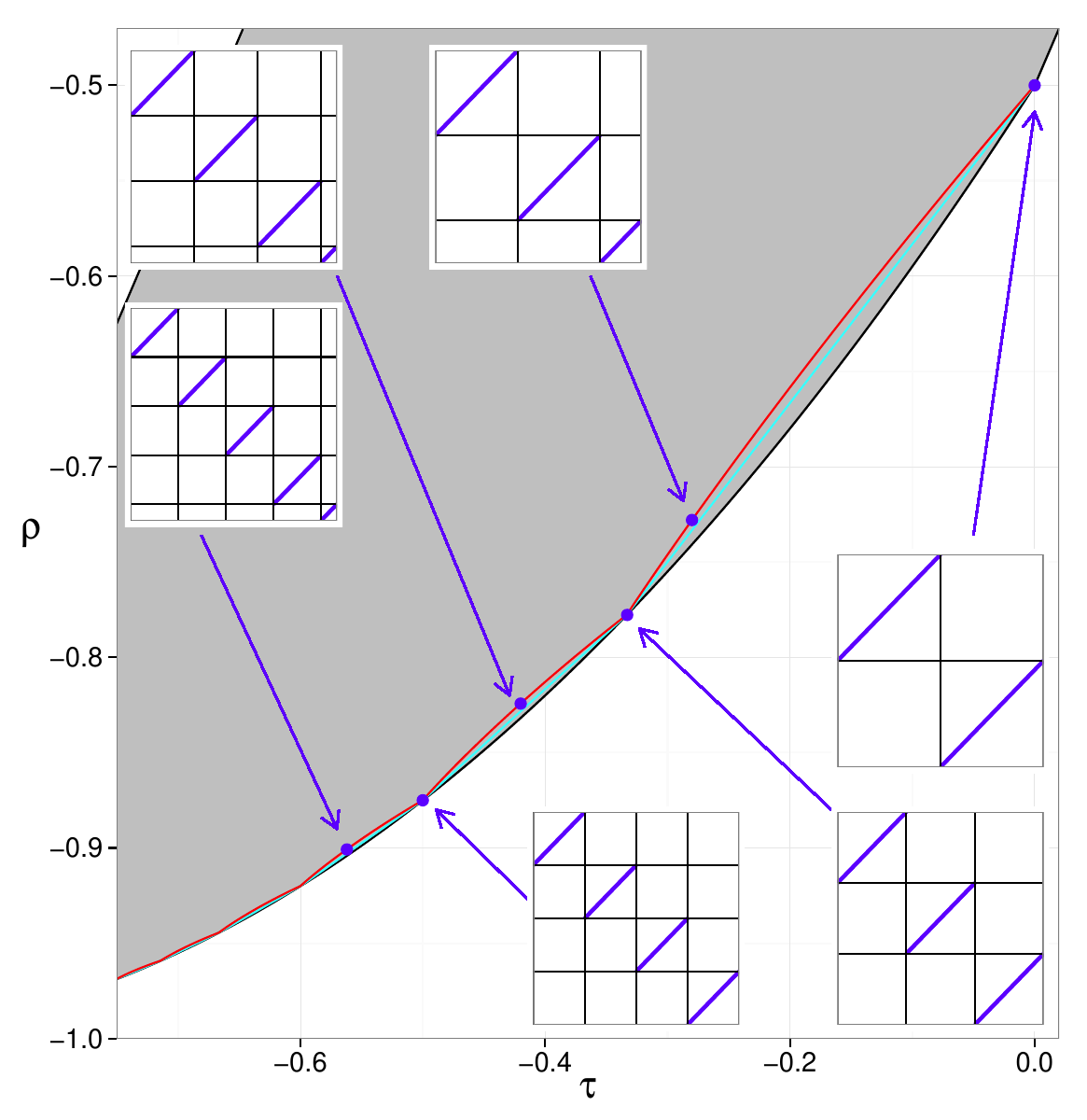}}\caption{The function $\Phi$ (red) and some prototypes with their corresponding Kendall's $\tau$ and Spearman's $\rho$. 
The shaded region depicts the classical $\tau$-$\rho$-region $\Omega_0$, 
straight lines connecting the points $p_n$ are plotted in green.}
\label{region_locale}
\end{figure}
 
Defining the compact set $\Omega_\Phi$ by 
\begin{equation}\label{omegaphi}
\Omega_\Phi=\big\{(x,y)\in [-1,1]^2: \Phi(x)\leq y \leq -\Phi(-x) \big\},
\end{equation}
we can now state the following main result the proof of which is given in the next section.
\begin{thm}\label{mainres}
The precise $\tau$-$\rho$ region $\Omega$ fulfils $\Omega \subseteq \Omega_\Phi$. 
\end{thm}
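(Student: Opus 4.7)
The plan is to combine a density reduction with a combinatorial minimization. By eq. (\ref{omegashuffle}) and the continuity of $\Phi$ (which makes $\Omega_\Phi$ closed), it suffices to prove $(\tau(A_h),\rho(A_h)) \in \Omega_\Phi$ for every $h \in \mathcal{S}^+$. Since the map $A\mapsto \hat A$ preserves $\kc$ and negates both $\tau$ and $\rho$, the upper bound $\rho(A)\le -\Phi(-\tau(A))$ for all $A\in \kc$ is equivalent to the lower bound $\rho(\hat A)\ge \Phi(\tau(\hat A))$; it therefore suffices to establish only
$$
\rho(A_h) \;\ge\; \Phi\bigl(\tau(A_h)\bigr) \qquad \textrm{for every } h \in \mathcal{S}^+.
$$

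Writing $h = h_{\pi,u}$ with $(\pi,u)\in \sigma_n\times\Delta_n$, Lemmas \ref{trafo1} and \ref{invexplicit} together with the identity $\invs(h_{\pi,u}) = (a_\pi(u)-b_\pi(u))/2$ from eq. (\ref{defab}) convert this into the combinatorial inequality
$$
1 - 6\, a_\pi(u) + 6\, b_\pi(u) \;\ge\; \Phi\bigl(1 - 4\, a_\pi(u)\bigr),
$$
and Lemma \ref{formulaproto} together with the definition of $\Phi$ precisely arranges that equality holds along the prototype family. Localizing to the range $\tau \in [\tfrac{2-n}{n},\tfrac{2-(n-1)}{n-1}]$ on which $\Phi = \Phi_n$, the task becomes: minimize $b_\pi(u)$ over all $(\pi,u)\in \sigma_m\times\Delta_m$ (any $m$) satisfying a prescribed value of $a_\pi(u)$, and verify that the minimum is attained on the prototype family.

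My approach splits this into two stages. First, I would reduce from arbitrary $\pi$ to a decreasing permutation: given $(\pi,u)$, perform adjacent transpositions of $\pi$ that remove non-inversions, each accompanied by a compensating rescaling of the affected coordinates of $u$ chosen so that $a_\pi(u)$ is preserved, and verify via the sum representations of Lemma \ref{invexplicit} that each such move does not increase $b_\pi(u)$. Once $\pi$ is decreasing, $I_\pi$ consists of all pairs and $Q_\pi$ of all triples, so $a_\pi(u) = e_2(u)$ and $b_\pi(u) = e_3(u)$, and the remaining statement reduces to the symmetric-function inequality $1 - 6\, e_2(u) + 6\, e_3(u) \ge \Phi(1 - 4\, e_2(u))$ on the simplex. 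I would establish this by a Lagrange-multiplier/smoothing argument: at an interior minimizer of $e_3$ subject to fixed $e_1$ and $e_2$, the stationarity conditions force the coordinates of $u$ to take at most two distinct values, matching exactly the prototype shape $u_1=\dots=u_{n-1}=r,\;u_n=1-(n-1)r$; the boundary cases where some $u_i=0$ reduce the dimension and are absorbed by induction on $n$, using the compatibility $\Phi_n(\tfrac{2-n}{n})=\Phi_{n+1}(\tfrac{2-n}{n})$ noted just after eq. (\ref{Phi}).

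The main obstacle is the permutation-reduction step. An adjacent transposition of $\pi$ alters $I_\pi$ by a single pair but affects $Q_\pi$ by an entire family of triples whose membership depends delicately on the ambient pattern of $\pi$, so monotonicity of $b_\pi(u)$ under these moves---while $a_\pi(u)$ is held fixed by a suitable rescaling of $u$---demands exactly the kind of careful bookkeeping that the authors defer to the Appendix. A possible alternative I would try in parallel is to bypass the permutation reduction by directly proving a pointwise lower bound on $b_\pi(u)$ in terms of $a_\pi(u)$ for any $(\pi,u)$, exploiting the structural fact that removing a pair from $I_\pi$ (relative to the decreasing $\pi$) automatically removes a controlled subset of triples from $Q_\pi$, and then optimizing the resulting constrained polynomial inequality.
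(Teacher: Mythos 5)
Your setup is sound and matches the paper: the reduction to straight shuffles via eq.~(\ref{omegashuffle}), the use of the symmetry $A\mapsto\hat A$ to handle only the lower bound, the translation through Lemmas \ref{trafo1} and \ref{invexplicit} into an inequality between $b_\pi(u)$ and a function of $a_\pi(u)$, and the observation that equality holds on prototypes are all exactly the paper's first moves. Your Stage~2 (for decreasing $\pi$, where $a_\pi=e_2$ and $b_\pi=e_3$, minimize $e_3$ subject to fixed $e_1,e_2$ and show the minimizer has at most two distinct coordinate values) is essentially the paper's Lemma \ref{step1}, which carries it out by a discriminant/perturbation argument on the cubic with prescribed $e_1,e_2$ rather than Lagrange multipliers; that part is fine.

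The genuine gap is your Stage~1. The paper does \emph{not} reduce an arbitrary $\pi$ to the decreasing permutation by $a$-preserving, $b$-non-increasing adjacent transpositions, and there is concrete reason to doubt such a scheme: since $\{i,j,k\}\in Q_\pi$ exactly when the number of inverted pairs among the three is odd, an adjacent transposition at positions $j,j+1$ flips the $Q_\pi$-membership of \emph{every} triple containing $\{j,j+1\}$, so the change in $b_\pi(u)$ is a signed sum $u_ju_{j+1}\sum_i\pm u_i$ of either sign; and the compensating rescaling of $u$ must simultaneously hold $e_1(u)=1$ and restore $a_\pi(u)$, which perturbs $b_\pi(u)$ in a way you have no control over. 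What the paper does instead is a case analysis on patterns in $\pi$ together with induction on $n$: if $\pi$ contains an increasing triple or the pattern of Lemma \ref{sufficientprel}(ii), one finds a direction $\delta$ with $\sum_i\delta_i=0$ along which $a_\pi$ is \emph{exactly constant} ($\alpha_1=\alpha_2=0$), the cubic term vanishes ($\beta_3=0$) and $b_\pi$ is concave ($\beta_2\le 0$, via the triangle inequality of Lemma \ref{lemma:triangle-ineq}); concavity pushes the minimizer to the boundary of $\Delta_n$ where some $u_k=0$, and one inducts on $n$. The permutations avoiding both patterns are characterized (Lemma \ref{adequ}) as those with $\pi$ or $\pi^{-1}$ almost decreasing; these are handled either by a block decomposition when $\pi(1)=n$ or $\pi(n)=1$ (combining the induction hypothesis with the prototype values, and crucially using the \emph{piecewise structure} of $\vartheta$ across different $n$), or by showing via an explicit swap and the monotonicity of $\vartheta$ that they cannot be global minimizers of $b_\pi(u)-\vartheta(a_\pi(u))$ (Lemma \ref{lastlem}). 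Note that the paper's argument works throughout with the objective $b_\pi(u)-\vartheta(a_\pi(u))$ and the monotonicity of $\vartheta$, not with moves that freeze $a_\pi(u)$; your alternative of a direct pointwise bound runs into the same parity-flip obstruction, since relative to the decreasing permutation a missing inversion does not simply delete triples from $Q_\pi$ but also adds new ones.
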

\begin{rmk}
\emph{
The fact that $\Omega \subseteq \Omega_\Phi$ holds is the principal result of this paper since it improves the
classical inequality by Durbin and Stuart mentioned in the Introduction and, more importantly, gives sharp bounds everywhere. 
In Section 5 we will, however, show that even $\Omega=\Omega_\Phi$ holds and that for every point 
$(x,y) \in \Omega$ there exists a shuffle $h \in \mathcal{S}$ such that
$(\tau(A_h),\rho(A_h))=(x,y)$.
}
\end{rmk}
\begin{rmk}
\emph{
A function similar (but not identical) to $\Phi$ has appeared in the literature in \cite{Shao}, where the authors
tried to deduce sharp bounds of $\Omega$ by running simulations (but did not provide any analytic proof). 
Additionally, it has been brought
to our attention during the preparation of this manuscript that Manuel \'Ubeda-Flores (University of Almer\'ia, Spain) already
conjectured Theorem \ref{mainres} (with the exact form of $\Phi$) in a unpublished working paper in 2009.   
}
\end{rmk}
 
\section{Proof of the main theorem}
Using the properties of $\Omega$ mentioned at the beginning of Section 3, Theorem \ref{mainres} is proved
if we can show that for every $h \in \mathcal{S}^+$ we have $\rho(A_{h}) \geq \Phi(\tau(A_{h}))$. 
Given Lemma \ref{invexplicit} it is straightforward to verify that this is equivalent to showing 
$\invs(h) \leq \varphi(\inv(h))$ for every $h \in \mathcal{S}^+$ where $\varphi:[0,\tfrac{1}{2}] \rightarrow [0,\tfrac{1}{6}]$
is defined by
\begin{equation}\label{phi}
\varphi(x) = \left\{
\begin{array}{ll}
\tfrac{1}{6} & \text{if } x = \frac{1}{2},\\
\varphi_n(x) & \text{if } x \in  [\tfrac{1}{2}-\tfrac{1}{2(n-1)},\tfrac{1}{2}-\tfrac{1}{2n}] \text{ for some } n\geq 2
\end{array} \right.
\end{equation}   
and $\varphi_n: [\tfrac{1}{2}-\tfrac{1}{2(n-1)},\tfrac{1}{2}-\tfrac{1}{2n}] \rightarrow [0,\tfrac{1}{6}]$ is given by
\begin{equation}\label{varphin}
\varphi_n(x)=\frac{1}{6} + \frac{1}{3n^2} - \frac{1}{2n} +\frac{x}{n} + \frac{n-2}{6n^2 \sqrt{n-1}}(n-1-2nx)^{3/2}.
\end{equation}
Translating this to $a_{\boldsymbol{\pi}}(\boldsymbol{u})$ and $b_{\boldsymbol{\pi}}(\boldsymbol{u})$, using eq. (\ref{defab}) and defining 
$\vartheta:[0,\tfrac{1}{2}] \rightarrow [0,\tfrac{1}{6}]$ by $\vartheta(x)=x-2\varphi(x)$
we arrive at the following equivalent form of Theorem \ref{mainres}:
\begin{thm}\label{mainresalt}
For every $n \in \mathbb{N}$, $\boldsymbol{\pi}\in \sigma_n$ and $\boldsymbol{u} \in \Delta_n$ the following inequality holds:
\begin{equation}\label{finalineq}
b_{\boldsymbol{\pi}}(\boldsymbol{u}) \geq \vartheta(a_{\boldsymbol{\pi}}(\boldsymbol{u}))
\end{equation}
\end{thm}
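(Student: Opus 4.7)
The plan is to establish $b_\pi(u)\ge\vartheta(a_\pi(u))$ by identifying the minimizers of $b_\pi$ on each level set $\{a_\pi=c\}$ and showing they correspond (possibly after a reparametrization) to prototypes. For each fixed $n$, the domain $\sigma_n\times\Delta_n$ is compact, so the infimum of $b_\pi(u)$ subject to $a_\pi(u)=c$ is attained; I would characterize it via Lagrange multipliers. Using Lemma~\ref{invexplicit},
\[
\partial_i a_\pi(u)=\sum_{j:\,\{i,j\}\in I_\pi}u_j,\qquad
\partial_i b_\pi(u)=\sum_{\{j,k\}:\,\{i,j,k\}\in Q_\pi}u_j u_k,
\]
so the KKT stationarity condition $\nabla b_\pi(u)=\lambda\nabla a_\pi(u)+\mu\mathbf{1}$ becomes a family of quadratic equations in the coordinates of $u$.

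The heart of the argument is the decreasing case $\pi_{\mathrm{dec}}=(n,n-1,\ldots,1)$. For this $\pi$ every pair $\{i,j\}$ with $i<j$ lies in $I_{\pi_{\mathrm{dec}}}$ and every triple $\{i,j,k\}$ with $i<j<k$ lies in $Q_{\pi_{\mathrm{dec}}}$, so Lemma~\ref{invexplicit} collapses $a_{\pi_{\mathrm{dec}}}(u)$ and $b_{\pi_{\mathrm{dec}}}(u)$ to the elementary symmetric polynomials $e_2(u)$ and $e_3(u)$. On the slice $\{u\in\Delta_n:e_2(u)=c\}$, Newton's identity gives $3e_3(u)=3c-1+\sum_i u_i^3$, so minimizing $e_3$ is equivalent to minimizing the third power sum $\sum u_i^3$ subject to $\sum u_i=1$, $\sum u_i^2=1-2c$, $u_i\ge 0$. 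The KKT stationarity $3u_i^2-2\mu u_i-\lambda=0$ is a single quadratic in the $u_i$'s, so each positive $u_i$ takes one of at most two values; consequently a minimizer, after relabeling and dropping zero coordinates, is of the prototype form $u_1=\cdots=u_{n-1}=r$, $u_n=1-(n-1)r$ with $r\in[\tfrac{1}{n},\tfrac{1}{n-1}]$ (the remaining two-value configurations being exhausted by varying $n$). Direct substitution via Lemma~\ref{formulaproto} verifies $b_{\pi_{\mathrm{dec}}}(u)=\vartheta(a_{\pi_{\mathrm{dec}}}(u))$ along the entire prototype family, giving equality on the boundary.

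What remains --- and what I expect to be the main obstacle --- is reducing an arbitrary $\pi$ to the decreasing case. A convenient first simplification is a \emph{merging} observation: whenever $\pi(k+1)=\pi(k)+1$, the $k$-th and $(k+1)$-th pieces of $h_{\pi,u}$ concatenate into a single linear segment, so those blocks may be combined into one of weight $u_k+u_{k+1}$ without changing the induced copula, hence preserving $a_\pi$ and $b_\pi$. Iterating leaves $\pi$ without consecutive ascents. The truly difficult step is handling these remaining non-decreasing permutations: I would expand $b_\pi(u)-\vartheta(a_\pi(u))$ via Lemma~\ref{invexplicit} and argue that every triple ``missing'' from $Q_\pi$ relative to $Q_{\pi_{\mathrm{dec}}}$ is more than compensated by the correspondingly missing pairs in $I_\pi$. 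The most plausible route is an induction on $n$ combined with an explicit surgery on $\pi$ (e.g.\ reversing a monotone run) that never increases $b_\pi(u)-\vartheta(a_\pi(u))$ and terminates at the decreasing permutation; the careful bookkeeping for this surgery is presumably the content of the ``tedious calculations'' relegated to the Appendix.
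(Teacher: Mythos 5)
Your treatment of the decreasing case is essentially the paper's Preliminary Step 1: both reduce to minimizing $e_3$ on a level set of $e_2$ over the simplex and conclude that the minimizers are prototypes. Even here, though, your KKT argument leaves a hole: stationarity forces each positive coordinate to satisfy one fixed quadratic, hence to take one of at most two values, but a configuration with $k$ coordinates equal to $r$ and $m-k$ equal to $s\neq r$ with $2\le k\le m-2$ is \emph{not} a prototype for any $n$, and ``exhausted by varying $n$'' does not dispose of it. The paper's Lemma \ref{step1} supplies the missing step, showing by explicit perturbations (a reduction to $n=3$ plus a separate treatment of the subcase $u_1>u_2=u_3>0$) that the smaller of the two values can occur with multiplicity at most one.

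The genuine gap is the reduction of an arbitrary $\pi$ to the decreasing case, which you correctly flag as the main obstacle but do not carry out; this is where nearly all of the paper's work lies. Your proposed surgery on $\pi$ (reversing a monotone run) is never shown to be monotone for $b_\pi(u)-\vartheta(a_\pi(u))$, and there is no reason to expect a purely combinatorial move on $\pi$ at fixed $u$ to succeed: the paper's mechanism perturbs the weight vector $u$, not the permutation. Concretely (Lemmas \ref{pertab} and \ref{sufficientprel}), whenever $\pi$ contains an increasing triple $\pi(p)<\pi(q)<\pi(r)$ with $p<q<r$, or a pattern $\pi(q)>\pi(p)>\pi(s)>\pi(r)$ with $p<q<r<s$, there is a direction $\delta$ with $\sum_i\delta_i=0$ along which $a_\pi$ is constant and $b_\pi$ is concave (this rests on a triangle-type inequality $c_{p,r}+c_{q,r}\ge c_{p,q}$ for triples outside $Q_\pi$); sliding to an endpoint of the feasible segment kills a coordinate of $u$, drops the dimension, and allows induction on $n$. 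The permutations avoiding both patterns are exactly those for which $\pi$ or $\pi^{-1}$ is almost decreasing (Lemma \ref{adequ}); these are handled by an ordinal-sum-type splitting when $\pi(1)=n$ or $\pi(n)=1$, and otherwise by a swap argument (Lemma \ref{lastlem}) showing such $(\pi,u)$ cannot minimize $b_\pi-\vartheta\circ a_\pi$. None of this machinery appears in your sketch, so the proposal does not yet constitute a proof of the theorem.
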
 
We are now going to prove this result and start with some first observations and an outline of the structure of 
the subsequent proof. (i) $\vartheta$ is continuous and, by
calculating the derivative, it is straightforward to see that $\vartheta$ is non-decreasing. 
(ii) $\vartheta(0)=\vartheta(\tfrac{1}{4})=0$ and $\vartheta(\tfrac{1}{2})=\tfrac{1}{6}$. (iii) For every 
prototype $(\boldsymbol{\pi},\boldsymbol{u})$ we have the equality $b_{\boldsymbol{\pi}}(\boldsymbol{u}) = \vartheta(a_{\boldsymbol{\pi}}(\boldsymbol{u}))$. (iv) For given $n$ and fixed $\boldsymbol{\pi} \in \sigma_n$
the functions $\boldsymbol{v} \mapsto a_{\boldsymbol{\pi}}(\boldsymbol{v})$ and $\boldsymbol{v} \mapsto b_{\boldsymbol{\pi}}(\boldsymbol{v})$ are continuous on $\Delta_n$, so there exists
some $\boldsymbol{u} \in \Delta_n$ minimizing the function $\boldsymbol{v} \mapsto b_{\boldsymbol{\pi}}(\boldsymbol{v}) - \vartheta(a_{\boldsymbol{\pi}}(\boldsymbol{v}))$.
(v) For $n\leq 2$ the inequality $b_{\boldsymbol{\pi}}(\boldsymbol{u}) \geq \vartheta(a_{\boldsymbol{\pi}}(\boldsymbol{u}))$ trivially holds for every 
$\boldsymbol{\pi} \in \sigma_n$ and every $\boldsymbol{u} \in \Delta_n$, so \textbf{from now on we will only consider the case $n\geq 3$}. \\

The structure of the proof of Theorem \ref{mainresalt} is as follows:
\begin{enumerate}
\item Preliminary Step 1: We prove inequality (\ref{finalineq}) for the case of decreasing $\boldsymbol{\pi} \in \sigma_n$.
\item Preliminary Step 2: We analyze how, for fixed $\boldsymbol{\pi} \in \sigma_n$, the quantities 
$a_{\boldsymbol{\pi}}(\boldsymbol{u})$ and $b_{\boldsymbol{\pi}}(\boldsymbol{u})$ change if $\boldsymbol{u} \in  \Delta_n$ changes.
\item Induction Step 1: Assuming that the result is true for all 
      $(\boldsymbol{\pi},\boldsymbol{u}) \in \sigma_m \times \Delta_m$ with $m<n$ we prove inequality (\ref{finalineq}) for 
     $(\boldsymbol{\pi},\boldsymbol{u}) \in \sigma_n \times \Delta_n$ under the hypothesis that there either exist 
       (i) $p < q < r$ such that $\boldsymbol{\pi}(r) > \boldsymbol{\pi}(q) > \boldsymbol{\pi}(p)$ or 
       (ii) $p<q<r<s$ such that $\boldsymbol{\pi}(q) > \boldsymbol{\pi}(p) > \boldsymbol{\pi}(s) > \boldsymbol{\pi}(r)$ holds.  
\item Induction Step 2: Assuming that the result is true for all 
      $(\boldsymbol{\pi},\boldsymbol{u}) \in \sigma_m \times \Delta_m$ with $m<n$ we prove inequality (\ref{finalineq}) for 
     $(\boldsymbol{\pi},\boldsymbol{u}) \in \sigma_n \times \Delta_n$ with $\boldsymbol{\pi}$ not fulfilling the hypothesis in Induction Step I.
\end{enumerate}

\emph{Preliminary Step 1}: Consider $n\geq 3$ and $\boldsymbol{\pi}=(n,n-1,\ldots,2,1)$. 
Note that in this situation we have $e_1(\boldsymbol{u})=1, e_2(\boldsymbol{u})=a_{\boldsymbol{\pi}}(\boldsymbol{u}), e_3(\boldsymbol{u})=b_{\boldsymbol{\pi}}(\boldsymbol{u})$ for every
$\boldsymbol{u} \in \Delta_n$, where $e_i$ denotes the $i$-th elementary symmetric polynomial for $i \in \{1,2,3\}$, i.e.
$e_1(\boldsymbol{v}):=\sum_{i}v_i$, $e_2(\boldsymbol{v}):=\sum_{i<j} v_iv_j$ and $e_3(\boldsymbol{v}):=\sum_{i<j<k} v_i v_j v_k$ for every $v \in \mathbb{R}^n$.
Hence $a_{\boldsymbol{\pi}}(\boldsymbol{u})$ and $b_{\boldsymbol{\pi}}(\boldsymbol{u})$ do not change if we reorder the coordinates of $\boldsymbol{u}$.
\begin{lem}\label{step1}
Suppose that $n \geq 3, \boldsymbol{\pi}=(n,n-1,\ldots ,2,1), c_2 \in a_{\boldsymbol{\pi}}(\Delta_n)$ and that
$\boldsymbol{u} \in \Delta_n$ fulfils $b_{\boldsymbol{\pi}}(\boldsymbol{u})=\min \{b_{\boldsymbol{\pi}}(\boldsymbol{v}): \boldsymbol{v} \in \Delta_n \cap (a_{\boldsymbol{\pi}})^{-1}(\{c_2\}) \}$ as well as 
$u_1 \ge \dotsm \ge u_n \ge 0$. Then there exists $m \in \{1, \dotsc, n\}$ such that $u_i = 0$ for every 
$i >m$, and $u_1 = \dotsm = u_{m-1} \ge u_m$. 
\end{lem}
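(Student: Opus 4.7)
My approach is to reformulate the problem as minimizing the power sum $p_3(v)=\sum_i v_i^3$ under fixed $p_1,p_2$, and then to exclude the case of two distinct strictly positive values with the smaller one appearing more than once by means of an explicit three-coordinate perturbation.

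First, since $\pi$ is decreasing, the remark preceding the lemma gives $a_\pi(v)=e_2(v)$ and $b_\pi(v)=e_3(v)$; combined with $e_1(v)=1$, Newton's identities yield $p_2(v)=1-2c_2$ and $p_3(v)=3b_\pi(v)+1-3c_2$. Hence minimizing $b_\pi$ subject to $a_\pi(v)=c_2$ is equivalent to minimizing $p_3(v)$ over $\{v\in[0,\infty)^n:\sum_iv_i=1,\ \sum_iv_i^2=1-2c_2\}$. A Lagrange multiplier calculation shows that every positive coordinate of the minimizer $u$ solves the same quadratic $3u_i^2-2\beta u_i-\alpha=0$, so $u$ has at most two distinct strictly positive values. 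Together with the sorted ordering, this yields
\begin{equation*}
u_1=\dots=u_j=a>u_{j+1}=\dots=u_{j+k}=b>u_{j+k+1}=\dots=u_n=0
\end{equation*}
for some $a>b>0$ and integers $j,k\ge 0$. The lemma reduces to $k\le 1$, so I would assume toward a contradiction that $j\ge 1$ and $k\ge 2$ (the case $j=0$ gives only one positive value and is trivial).

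The heart of the proof is a three-coordinate perturbation. Pick one index $p$ with $u_p=a$ and two indices $q,r$ with $u_q=u_r=b$, and perturb only these three coordinates by $(\eta_1,\eta_2,\eta_3)$, demanding that both $a+2b$ and $a^2+2b^2$ are preserved. These two constraints automatically preserve $e_1$ and $e_2$ of the full vector. Moreover, since the triple's pair-sum $e_2^{(T)}=\tfrac{1}{2}((a+2b)^2-(a^2+2b^2))$ is preserved and $\eta_1+\eta_2+\eta_3=0$, every contribution to $e_3(u')-e_3(u)$ from a triple meeting $\{p,q,r\}$ in exactly one or two indices cancels, and one is left with
\begin{equation*}
e_3(u')-e_3(u)=(a+\eta_1)(b+\eta_2)(b+\eta_3)-ab^2.
\end{equation*}
Parametrizing by $\eta_2=t$ and using $\eta_3=-t-\eta_1$, the preservation of sum of squares becomes a quadratic equation in $\eta_1$ whose relevant root expands as $\eta_1=-t^2/(a-b)+O(t^3)$, forcing also $\eta_3=-t+t^2/(a-b)+O(t^3)$. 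Substituting these expansions into the triple product produces
\begin{equation*}
e_3(u')-e_3(u)=-(a-b)\,t^2+O(t^3),
\end{equation*}
which is strictly negative for all sufficiently small $t>0$ since $a>b$.

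For such $t$ the perturbed values remain in $[0,1]$, so $u'\in\Delta_n$ with $a_\pi(u')=c_2$ and $b_\pi(u')<b_\pi(u)$, contradicting the minimality of $u$. This rules out $k\ge 2$ and completes the proof. The main obstacle is the second-order bookkeeping in the perturbation step: one has to correctly solve the quadratic forced by $p_2$-preservation to extract the coefficient $-1/(a-b)$ in $\eta_1$, and then carefully track the cancellations in the expansion of the triple product so that the leading nontrivial term collapses to exactly $-(a-b)t^2$. Everything else is routine symmetric-function bookkeeping and the trivial verification of the conclusion in the cases $k\in\{0,1\}$.
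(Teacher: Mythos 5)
Your proof is correct, but it takes a genuinely different route from the paper's. The paper first settles $n=3$ directly: for three distinct positive coordinates it perturbs the constant term of the cubic $(T-u_1)(T-u_2)(T-u_3)$ and uses positivity of the discriminant to produce a feasible point with strictly smaller $e_3$, and for the remaining case $u_1>u_2=u_3>0$ it writes down explicit vectors $y$ with the same $e_1,e_2$ and $e_3(y)=e_3(u)-\tfrac{4}{27}(u_1-u_2)^3$ (splitting into the sub-cases $u_1\ge 4u_2$ and $4u_2>u_1$); the general $n$ is then reduced to $n=3$ by rescaling three offending coordinates. You instead pass to power sums via Newton's identities and use a Lagrange/KKT argument to conclude that every positive coordinate of a minimizer satisfies one fixed quadratic, hence takes at most two values; this replaces the paper's discriminant step entirely. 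Your exclusion of the case where the smaller value $b$ has multiplicity at least $2$ is again a three-coordinate perturbation preserving the triple's $e_1$ and $e_2$ (the same mechanism that underlies the paper's reduction step), but carried out asymptotically with leading term $-(a-b)t^2$ rather than by an exact substitution; I checked the quadratic for $\eta_1$ and the expansion of the triple product, and the coefficient $-(a-b)$ is right. What the paper's route buys is complete explicitness, with exact perturbed vectors and no calculus; what yours buys is a shorter case analysis, since the Lagrange step eliminates the three-distinct-values case and the $4u_2$ sub-cases never arise. One point you should make explicit: the KKT conclusion needs the gradients of $p_1$ and $p_2$ to be linearly independent on the set of positive coordinates, which fails precisely when all positive coordinates are equal --- but in that degenerate case the conclusion of the lemma already holds, so nothing is lost.
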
 
\begin{proof} Note that the continuity of $b_{\boldsymbol{\pi}}$ and the compactness of $\Delta_n \cap (a_{\boldsymbol{\pi}})^{-1}(\{c_2\})$ imply the
existence of the minimum.  
We first prove the statement for the case $n = 3$ and suppose that $\boldsymbol{u}$ is a minimizer fulfilling $u_3 \geq u_2 \geq u_1 \geq 0$. 
Define a polynomial $f:\mathbb{R} \rightarrow \mathbb{R}$ by
\[
f(T) = (T-u_1)(T-u_2)(T-u_3) = T^3 -  T^2 + c_2 T - e_3(\boldsymbol{u}),
\]
and let $D_f$ denote the discriminant of $f$.
It is well known that $D_f > 0$ if and only if $f$ has three distinct real zeros and that in case of
$D_f \neq 0$ locally the zeros of $f$ are smooth (so in particular continuous) functions of the coefficients of $f$.

Suppose that $u_1 > u_2 > u_3 > 0$. Then $D_f > 0$.
Let $f_{\epsilon}(T) = T^3 -  T^2 + c_2 T - (e_3(\boldsymbol{u})-\epsilon)$, then for small enough values of 
$\epsilon > 0$, the polynomial $f_{\epsilon}$ has three distinct, positive real zeros: 
$u_{\epsilon,1}, u_{\epsilon,2}, u_{\epsilon,3}$.
Then $u_{\epsilon,1} + u_{\epsilon,2} + u_{\epsilon,3} = 1$ and 
$u_{\epsilon,1} u_{\epsilon,2} + u_{\epsilon,2} u_{\epsilon,3} + u_{\epsilon,3} u_{\epsilon,1} = c_2$, while 
$u_{\epsilon,1} u_{\epsilon,2} u_{\epsilon,3} = e_3(\boldsymbol{u}) - \epsilon < e_3(\boldsymbol{u})$, which is a contradiction.
So either $u_3 = 0$ or $u_1 = u_2$ or $u_1 > u_2 = u_3 > 0$.
In the first two cases we are done, so suppose that $u_1 > u_2 = u_3 > 0$.
Then $1 = u_1 + 2u_2$ and $c_2 = 2u_1 u_2 + u_2^2$.
Suppose that $u_1 \ge 4 u_2$.
Then $1 \ge 4 c_2$, so there are unique $y_1 \ge y_2 \ge 0$ such that $y_1 + y_2 = 1$ and $y_1 y_2 = c_2$. 
Let $y_3 = 0$, then considering $\boldsymbol{y}=(y_1,y_2,y_3)$ we get $e_1(\boldsymbol{y}) = 1$, $e_2(\boldsymbol{y}) = c_2$, and 
$e_3(\boldsymbol{y}) = 0 < e_3(\boldsymbol{u})$, which is a contradiction. So $4u_2 > u_1 > u_2$.
Let $y_1 = y_2 = \frac{2u_1+u_2}{3}$ and $y_3 = \frac{4u_2 - u_1}{3}$.
Then $y_1, y_2, y_3 \ge 0$, $e_1(\boldsymbol{y}) = 1$, $e_2(\boldsymbol{y}) = c_2$, and 
$e_3(\boldsymbol{y}) = \frac{1}{27}(2u_1+u_2)^2 (4u_2-u_1) = e_3(\boldsymbol{u}) - \frac{4}{27}(u_1-u_2)^3 < e_3(\boldsymbol{u})$, which is a contradiction.
This proves the claim for $n = 3$.

Suppose indirectly that the statement is false for some $n >3$.
Then there are $i<j<k$ such that $u_i > u_j \ge u_k > 0$.
Setting $\bar u_l:=\frac{u_l}{u_i+u_j+u_k}$ for every $l \in \{i,j,k\}$ obviously $\bar u_i +\bar u_j +\bar u_k=1$.
Applying the case $n=3$ to $\bar u_i, \bar u_j, \bar u_k$ yields  
$\bar y_i, \bar y_j, \bar y_k \in [0,1]$ such that 
$\bar y_i + \bar y_j + \bar y_k = \bar u_i + \bar u_j + \bar u_k$, $\bar y_i \bar y_j + \bar y_j \bar y_k + 
\bar y_k \bar y_i = \bar u_i \bar u_j + \bar u_j \bar u_k + \bar u_k \bar u_i$ and 
$\bar y_i \bar y_j \bar y_k < \bar u_i \bar u_j \bar u_k$.
Setting $y_l = u_l$ for every $l \in \{1, \dotsc, n\} \setminus \{i,j,k\}$ and $y_l = \bar y_l (u_i+u_j+u_k)$ 
for every $l \in \{i,j,k\}$ finally yields  
$e_1(\boldsymbol{y}) = e_1(\boldsymbol{u})$, $e_2(\boldsymbol{y}) = e_2(\boldsymbol{u})$ and $e_3(\boldsymbol{y}) < e_3(\boldsymbol{u})$, which is a contradiction.
\end{proof}
\begin{cor}
Suppose that $n \geq 3$ and that $\boldsymbol{\pi}=(n,n-1,\ldots,2,1)$. Then $b_{\boldsymbol{\pi}}(\boldsymbol{u}) \geq \vartheta(a_{\boldsymbol{\pi}}(\boldsymbol{u}))$ holds for
every $\boldsymbol{u} \in \Delta_n$. 
\end{cor}
\emph{Preliminary Step 2}: We investigate how, for fixed $\boldsymbol{\pi} \in \sigma_n$, the quantities 
$a_{\boldsymbol{\pi}}(\boldsymbol{u})$ and $b_{\boldsymbol{\pi}}(\boldsymbol{u})$ change if $\boldsymbol{u} \in  \Delta_n$ changes. 
To do so, temporarily extend $a_{\boldsymbol{\pi}}$ and $b_{\boldsymbol{\pi}}$ to the full $\mathbb{R}^n$ using the identities in 
Lemma \ref{invexplicit}. The following lemmata (whose proof is given in the Appendix) will be crucial in the sequel.
\begin{lem}\label{pertab}
Suppose that $n \geq 3$ and that $\boldsymbol{\delta}=(\delta_1,\ldots,\delta_n) \in \mathbb{R}^n$ fulfils $\sum_{i} \delta_i=0$. Then
for every $t \in \mathbb{R}$ the following identities hold:
\begin{align}
a_{{\boldsymbol{\pi}}}(\boldsymbol{u} + t\boldsymbol{\delta}) - a_{{\boldsymbol{\pi}}}(\boldsymbol{u}) &= \alpha_1 t + \alpha_2 t^2 \label{diffa}\\
b_{{\boldsymbol{\pi}}}(\boldsymbol{u} + t\boldsymbol{\delta}) - b_{{\boldsymbol{\pi}}}(\boldsymbol{u}) &= \beta_1 t + \beta_2 t^2 + \beta_3 t^3 \label{diffb}
\end{align}
where
\[
\alpha_1 = \sum_i a_i \delta_i, \qquad
\alpha_2 = \sum_{i < j, \, \{i,j\} \in I_{{\boldsymbol{\pi}}}} \delta_i \delta_j,
\]
\[
\beta_1 = \sum_i b_i \delta_i, \qquad
\beta_2 = \sum_{i<j} c_{i,j} \delta_i \delta_j, \qquad
\beta_3 = \sum_{i<j<k, \{i,j,k\} \in Q_{{\boldsymbol{\pi}}}} \delta_i \delta_j \delta_k,
\]
and
\[
a_i = \sum_{j: \, \{i,j\} \in I_{{\boldsymbol{\pi}}}} u_j, \qquad
b_i = \sum_{j<k, \, \{i,j,k\} \in Q_{{\boldsymbol{\pi}}}} u_j u_k, \qquad
c_{i,j} = c_{j,i} = \sum_{k: \, \{i,j,k\} \in Q_{{\boldsymbol{\pi}}}} u_k.
\]
 
\end{lem}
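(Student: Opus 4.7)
The plan is a direct polynomial expansion grounded in Lemma \ref{invexplicit}. Extending $a_\pi$ and $b_\pi$ to all of $\mathbb{R}^n$ via the formulas given there, $a_\pi$ is a homogeneous quadratic and $b_\pi$ a homogeneous cubic polynomial in the coordinates. Consequently the one-variable functions $t \mapsto a_\pi(u + t\delta)$ and $t \mapsto b_\pi(u + t\delta)$ are polynomials in $t$ of degrees at most $2$ and $3$ respectively, so identities (\ref{diffa}) and (\ref{diffb}) must hold with \emph{some} coefficients, and the task reduces to identifying those coefficients by collecting terms. Note that the hypothesis $\sum_i \delta_i = 0$ plays no role in the algebraic expansion itself; its purpose is to ensure that $u + t\delta$ remains in the affine hyperplane containing $\Delta_n$, so that the perturbation is geometrically admissible when the lemma is later applied.

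For the quadratic $a_\pi$, I would expand $(u_i + t\delta_i)(u_j + t\delta_j)$ for each $\{i,j\}\in I_\pi$ with $i<j$. The constant term reproduces $a_\pi(u)$, the $t^2$-term is $\sum_{i<j,\,\{i,j\}\in I_\pi}\delta_i\delta_j = \alpha_2$ as desired, and the $t^1$-term equals $\sum_{i<j,\,\{i,j\}\in I_\pi}(u_i\delta_j + u_j\delta_i)$. Interchanging the order of summation by grouping according to the single index where $\delta$ sits yields $\sum_\ell \delta_\ell \sum_{k:\,\{\ell,k\}\in I_\pi} u_k = \sum_\ell a_\ell \delta_\ell = \alpha_1$, which is the claimed formula.

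For the cubic $b_\pi$, I would similarly expand the product $(u_i+t\delta_i)(u_j+t\delta_j)(u_k+t\delta_k)$ over the triples $\{i,j,k\}\in Q_\pi$ with $i<j<k$ and classify the terms by how many factors of $\delta$ they contain. The $t^0$-term gives $b_\pi(u)$ and the $t^3$-term gives $\sum_{i<j<k,\,\{i,j,k\}\in Q_\pi} \delta_i\delta_j\delta_k = \beta_3$ directly. The linear contribution collects summands containing exactly one $\delta$; regrouping by the index $\ell$ of that $\delta$ produces $\delta_\ell \cdot \sum_{j<k,\,\{\ell,j,k\}\in Q_\pi} u_j u_k = b_\ell\delta_\ell$, hence $\beta_1=\sum_\ell b_\ell\delta_\ell$. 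The quadratic contribution collects summands with two $\delta$-factors and one $u$-factor; regrouping by the unordered pair $\{i,j\}$ of $\delta$-indices gives $\sum_{i<j}\delta_i\delta_j\sum_{k:\,\{i,j,k\}\in Q_\pi} u_k = \sum_{i<j} c_{i,j}\delta_i\delta_j = \beta_2$.

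There is no conceptual obstacle here — the proof is a mechanical expansion/Taylor argument. The only delicate point is careful bookkeeping while interchanging summations: $I_\pi$ and $Q_\pi$ are sets of \emph{unordered} pairs and triples, whereas the expansions of $\prod(u_\cdot + t\delta_\cdot)$ produce ordered contributions, and one has to verify the combinatorial matching between these two indexings. This is what makes the calculation tedious enough to warrant being placed in the Appendix, but nothing beyond routine rearrangement is needed.
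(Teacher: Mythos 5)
Your proposal is correct and follows essentially the same route as the paper's Appendix proof: a direct expansion of the quadratic and cubic polynomials in $t$, followed by regrouping the degree-one terms by the index carrying $\delta$ and the degree-two terms by the unordered pair of $\delta$-indices, exactly as in the paper's rearrangement of the sums over $I_\pi$ and $Q_\pi$. Your side remark that $\sum_i\delta_i=0$ is not needed for the algebraic identity itself (only for keeping $u+t\delta$ in the affine hull of $\Delta_n$) is also accurate.
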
    
\begin{lem} \label{lemma:triangle-ineq}
Suppose that $n\geq 3$ and that $\boldsymbol{\pi} \in \sigma_n$.
If $p,q,r \in \{1, \dotsc, n\}$ are distinct elements such that $\{p,q,r\} \notin Q_{\boldsymbol{\pi}}$, 
then $c_{p,r} + c_{q,r} \ge c_{p,q} \ge 0$.
\end{lem}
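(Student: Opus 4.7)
The plan is to prove the inequality one index at a time, comparing the contribution of $u_s$ to the two sides. Using the formula for $c_{i,j}$ from Lemma \ref{pertab}, the coefficients of $u_p$, $u_q$, $u_r$ in $c_{p,r}+c_{q,r}-c_{p,q}$ are each equal to $\pm\mathbf{1}_{\{p,q,r\}\in Q_\pi}$, which vanish under the hypothesis. Hence
\[
c_{p,r}+c_{q,r}-c_{p,q} \;=\; \sum_{s\notin\{p,q,r\}} u_s\bigl(\mathbf{1}_{\{p,r,s\}\in Q_\pi}+\mathbf{1}_{\{q,r,s\}\in Q_\pi}-\mathbf{1}_{\{p,q,s\}\in Q_\pi}\bigr),
\]
and since all $u_s\ge 0$, it will suffice to show that the parenthesised integer is nonnegative for every $s\notin\{p,q,r\}$. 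The bound $c_{p,q}\ge 0$ is immediate from $u\ge 0$.

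The key step is a parity reformulation of membership in $Q_\pi$. Writing $\iota(x,y)=1$ if $\{x,y\}$ is inverted by $\pi$ and $0$ otherwise, a direct inspection of the six order patterns on a triple $i<j<k$ shows that the three patterns defining $Q_\pi$ (namely $321$, $132$, $213$) are precisely those producing an \emph{odd} number of inversions among the pairs $\{i,j\},\{j,k\},\{i,k\}$: the inversion counts for the patterns $123,132,213,231,312,321$ are $0,1,1,2,2,3$. Consequently
\[
\{i,j,k\}\in Q_\pi \iff \iota(i,j)+\iota(j,k)+\iota(i,k)\equiv 1\pmod 2.
\]

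Now I would apply this criterion to the four $3$-subsets of $\{p,q,r,s\}$. Summing the corresponding four parities, each of the six pairs of $\{p,q,r,s\}$ appears in exactly two triples, so the total is even. Therefore an even number of the four triples lie in $Q_\pi$. Since by hypothesis $\{p,q,r\}\notin Q_\pi$, an even number (hence $0$ or $2$) of $\{p,q,s\},\{p,r,s\},\{q,r,s\}$ lie in $Q_\pi$. A short case check over the four resulting subcases shows that the indicator combination above is $0$ in three of them and equals $2$ in the fourth (namely when both $\{p,r,s\}$ and $\{q,r,s\}$ lie in $Q_\pi$); in particular it is always nonnegative, finishing the proof.

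The main obstacle is really the parity reformulation in the second step; once it is established, the rest is automatic, since the fact that every pair of $\{p,q,r,s\}$ sits in exactly two of its three-element subsets forces the ``even out of four'' property which drives the inequality.
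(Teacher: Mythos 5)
Your proof is correct and follows essentially the same route as the paper: both reduce the inequality to a per-index comparison of the indicators $\mathbf{1}_{\{p,r,s\}\in Q_\pi}+\mathbf{1}_{\{q,r,s\}\in Q_\pi}-\mathbf{1}_{\{p,q,s\}\in Q_\pi}$ and both rest on the observation that membership in $Q_\pi$ is governed by the parity of the number of inverted pairs in the triple (the paper encodes this as a product of $\pm1$ signs and derives a contradiction in the single bad case, which is exactly your ``an even number of the four triples of $\{p,q,r,s\}$ lie in $Q_\pi$'' identity). The differences are purely notational.
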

We now state two conditions for $\boldsymbol{\pi}$ that imply the existence of a direction 
$\boldsymbol{\delta}\in \mathbb{R}^n\setminus \{0\}$ with $\sum_{i} \delta_i=0$ 
such that $t \mapsto a_{\boldsymbol{\pi}}(\boldsymbol{u}+t\boldsymbol{\delta})-a_{\boldsymbol{\pi}}(\boldsymbol{u})$ is identical to zero for every $t$ and 
$t \mapsto b_{\boldsymbol{\pi}}(\boldsymbol{u}+t\boldsymbol{\delta})-b_{\boldsymbol{\pi}}(\boldsymbol{u})$ is of degree two and concave.  
\begin{lem}\label{sufficientprel}
Suppose that $n\geq 3$, that $\boldsymbol{\pi} \in \sigma_n$, and that one of the following two conditions holds:
\begin{enumerate}
\item[(i)] There exist $p,q,r \in \{1,2,\ldots,n\}$ with $p < q < r$ and $\boldsymbol{\pi}(r) > \boldsymbol{\pi}(q) > \boldsymbol{\pi}(p)$. 
\item[(ii)] There exist $p,q,r,s \in \{1,2,\ldots,n\}$ with $p<q<r<s$ and $\boldsymbol{\pi}(q) > \boldsymbol{\pi}(p) > \boldsymbol{\pi}(s) > \boldsymbol{\pi}(r)$.  
\end{enumerate}
Then there exists $\boldsymbol{\delta}\in \mathbb{R}^n\setminus \{0\}$  
such that the coefficients in (\ref{diffa}) and (\ref{diffb}) fulfil 
$\alpha_1=\alpha_2=\beta_3=0$ and $\beta_2 \leq 0$.
\end{lem}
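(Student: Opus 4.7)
The plan is to handle the two cases separately, in each case taking $\delta$ supported on the indices appearing in the hypothesis so that several coefficients vanish for purely structural reasons and the remaining quadratic form in the surviving free parameters can be shown to be negative semidefinite via Lemma \ref{lemma:triangle-ineq}.

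For case (i), with $p<q<r$ and $\pi(p)<\pi(q)<\pi(r)$, I would take $\delta$ supported on $\{p,q,r\}$. Since $\pi$ is increasing on these three indices there are no inversions in $\{p,q,r\}$, so $\alpha_2=0$ automatically; also $\{p,q,r\}\notin Q_\pi$, so $\beta_3=0$ automatically. The linear constraints $\delta_p+\delta_q+\delta_r=0$ and $\alpha_1=a_p\delta_p+a_q\delta_q+a_r\delta_r=0$ leave a solution space of dimension at least one, hence a nonzero $\delta$ exists. Substituting $\delta_r=-\delta_p-\delta_q$ yields
\[
\beta_2 = -c_{p,r}\delta_p^2 - c_{q,r}\delta_q^2 + (c_{p,q}-c_{p,r}-c_{q,r})\delta_p\delta_q,
\]
and writing $x=c_{p,r}$, $y=c_{q,r}$, $z=c_{p,q}$, Lemma \ref{lemma:triangle-ineq} applied to $\{p,q,r\}\notin Q_\pi$ gives the full triangle inequalities $|x-y|\le z\le x+y$. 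Hence $s:=x+y-z$ satisfies $0\le s\le 2\min(x,y)\le 2\sqrt{xy}$, so $(c_{p,q}-c_{p,r}-c_{q,r})^2\le 4xy$, which is exactly the negative semidefiniteness condition for the quadratic form, giving $\beta_2\le 0$.

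For case (ii), with $p<q<r<s$ in pattern $2143$, the key combinatorial observation is that each of the four $3$-subsets of $\{p,q,r,s\}$ has pattern $231$ or $312$ and therefore none of them lies in $Q_\pi$; consequently $\beta_3=0$ automatically for any $\delta$ supported on $\{p,q,r,s\}$. The inversions among $\{p,q,r,s\}$ form the bipartite set $\{p,q\}\times\{r,s\}$, so $\alpha_2=(\delta_p+\delta_q)(\delta_r+\delta_s)$. Combined with $\sum_i\delta_i=0$, demanding $\alpha_2=0$ forces $\delta_p+\delta_q=\delta_r+\delta_s=0$. Parameterising $\delta_p=-\delta_q=a$ and $\delta_r=-\delta_s=c$, the remaining linear constraint $\alpha_1=(a_p-a_q)a+(a_r-a_s)c=0$ defines a solution line, and a direct calculation reduces $\beta_2$ to
\[
\beta_2 = -c_{p,q}a^2 - c_{r,s}c^2 + \eta\, ac, \qquad \eta := c_{p,r}+c_{q,s}-c_{p,s}-c_{q,r}.
\]

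The main obstacle is showing that this quadratic form is negative semidefinite, i.e., $\eta^2\le 4c_{p,q}c_{r,s}$. I would derive this by combining triangle inequalities from Lemma \ref{lemma:triangle-ineq} applied to the four non-$Q_\pi$ triples: from $\{p,r,s\}$ and $\{q,r,s\}$ the bounds $c_{p,r}\le c_{p,s}+c_{r,s}$ and $c_{q,s}\le c_{q,r}+c_{r,s}$ sum to $\eta\le 2c_{r,s}$; analogously $\{p,q,r\}$ and $\{p,q,s\}$ yield $\eta\le 2c_{p,q}$; by symmetry of the argument one obtains the same two upper bounds for $-\eta$. Hence $|\eta|\le 2\min(c_{p,q},c_{r,s})\le 2\sqrt{c_{p,q}c_{r,s}}$, so $\beta_2\le 0$ holds on the entire $(a,c)$-plane and a fortiori on the constraint line $\alpha_1=0$, completing the proof.
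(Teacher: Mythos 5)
Your proof is correct and follows essentially the same route as the paper: the same choice of $\delta$ supported on $\{p,q,r\}$ resp.\ $\{p,q,r,s\}$, the same linear constraints forcing $\alpha_1=\alpha_2=\beta_3=0$, and the same use of Lemma \ref{lemma:triangle-ineq} to bound the off-diagonal coefficient of the residual quadratic form by twice the minimum of the diagonal ones. The only (cosmetic) difference is that you conclude $\beta_2\le 0$ via the discriminant criterion in both cases, whereas the paper uses a sign argument on $\delta_p\delta_q$ in case (i) and an explicit sum-of-squares identity in case (ii).
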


\emph{Induction Step 1}: We prove the induction step for every $\boldsymbol{\pi} \in \sigma_n$ fulfilling one of the 
conditions in Lemma \ref{sufficientprel}. 
\begin{lem}
Suppose that $n \geq 3$ and that $b_{\boldsymbol{\omega}} (\boldsymbol{v}) \geq \vartheta(a_{\boldsymbol{\omega}}(\boldsymbol{v}))$ holds for all 
$(\boldsymbol{\omega},\boldsymbol{v}) \in \sigma_m \times \Delta_m$ with $m<n$. If $\boldsymbol{\pi} \in \sigma_n$ fulfils one of the 
conditions in Lemma \ref{sufficientprel} then $b_{\boldsymbol{\pi}}(\boldsymbol{u}) \geq \vartheta(a_{\boldsymbol{\pi}}(\boldsymbol{u}))$ for every $\boldsymbol{u} \in \Delta_n$.
\end{lem}
\begin{proof}
Suppose that $\boldsymbol{\pi} \in \sigma_n$ fulfils one of the conditions in Lemma \ref{sufficientprel} and consider 
$\boldsymbol{u} \in \Delta_n$. If $u_k=0$ for some $k \in \{1,\ldots,n\}$ then, defining $(\boldsymbol{\pi}',\boldsymbol{v}) \in \sigma_{n-1} \times \Delta_{n-1}$
by $v_i=u_i$ for $i<k$ and $v_i=u_{i+1}$ for $i \geq k$ as well as
$$
\boldsymbol{\pi}'(i) = \left\{
\begin{array}{ll}
\boldsymbol{\pi}(i) & \text{if } i<k \text{ and } \boldsymbol{\pi}(i)<\boldsymbol{\pi}(k)\\
\boldsymbol{\pi}(i)-1 & \text{if } i<k \text{ and } \boldsymbol{\pi}(i)>\boldsymbol{\pi}(k),\\
\boldsymbol{\pi}(i+1) & \text{if } i\geq k \text{ and } \boldsymbol{\pi}(i+1)<\boldsymbol{\pi}(k)\\
\boldsymbol{\pi}(i+1)-1 & \text{if } i \geq k\text{ and } \boldsymbol{\pi}(i+1)>\boldsymbol{\pi}(k),\\
\end{array} \right.
$$
we immediately get $b_{\boldsymbol{\pi}}(\boldsymbol{u})=b_{\boldsymbol{\pi}'}(\boldsymbol{v}) \geq \vartheta(a_{\boldsymbol{\pi}'}(\boldsymbol{v}))=\vartheta(a_{\boldsymbol{\pi}}(\boldsymbol{u}))$. 

Suppose now that $\boldsymbol{u} \in (0,1)^n$ and, using Lemma \ref{sufficientprel}, 
choose $\boldsymbol{\delta} \in \mathbb{R}^n\setminus \{0\}$ such that $\beta_2\leq 0$ and $a_{\boldsymbol{\pi}}(\boldsymbol{u}+t \boldsymbol{\delta})=a_{\boldsymbol{\pi}}(\boldsymbol{u})$ and
$b_{\boldsymbol{\pi}}(\boldsymbol{u}+t \boldsymbol{\delta})-b_{\boldsymbol{\pi}}(\boldsymbol{u})=\beta_1t + \beta_2 t^2$ for all $t \in \mathbb{R}$.
Considering $\boldsymbol{u} \in (0,1)^n$ there are $t_0 < 0 < t_1$ such that 
$\boldsymbol{u}+t\boldsymbol{\delta} \in [0,1]^n$ if and only if $t \in [t_0, t_1]$. Concavity of $t \mapsto b_{\boldsymbol{\pi}}(\boldsymbol{u} + t \boldsymbol{\delta})$ 
implies that $b_{\boldsymbol{\pi}}(\boldsymbol{u} + t_0 \boldsymbol{\delta}) \le b_{\boldsymbol{\pi}}(\boldsymbol{u})$ or $b_{\boldsymbol{\pi}}(\boldsymbol{u} + t_1 \boldsymbol{\delta}) \le b_{\boldsymbol{\pi}}(\boldsymbol{u})$.
Moreover there are $i,j$ such that $(\boldsymbol{u}+t_0 \boldsymbol{\delta})_i = 0$ and $(\boldsymbol{u}+t_1 \boldsymbol{\delta})_j = 0$ by construction, 
so we can proceed as in the first step of the proof and use induction to get $b_{\boldsymbol{\pi}}(\boldsymbol{u}) \geq \vartheta(a_{\boldsymbol{\pi}}(\boldsymbol{u}))$.
\end{proof}

\emph{Induction Step 2:} As a final step we concentrate on permutations 
$\boldsymbol{\pi} \in \sigma_n$ not fulfilling any of the two conditions in \ref{sufficientprel} and start with the following
definition and the subsequent lemma (whose proof can be found in the Appendix).
\begin{defin}
A permutation $\boldsymbol{\pi} \in \sigma_l$ is called \emph{almost decreasing} if there is at most one 
$i \in \{1, \dotsc, l-1\}$ so that $\boldsymbol{\pi}(i) < \boldsymbol{\pi}(i+1)$.  
\end{defin}
\begin{lem}\label{adequ}
Let $l \geq 1 $ and $\boldsymbol{\pi} \in \sigma_l$. Then the following two conditions are equivalent:
\begin{itemize}
\item
There are no $1 \le p < q < r \le l$ so that $\boldsymbol{\pi}(p) < \boldsymbol{\pi}(q) < \boldsymbol{\pi}(r)$, and there are no $1 \le p < q < r < s \le l$ so that $\boldsymbol{\pi}(r) < \boldsymbol{\pi}(s) < \boldsymbol{\pi}(p) < \boldsymbol{\pi}(q)$.
\item
$\boldsymbol{\pi}$ or $\boldsymbol{\pi}^{-1}$ is almost decreasing.
\end{itemize}
\end{lem}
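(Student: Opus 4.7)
\emph{The ``$\Leftarrow$'' direction.} Both permutation matrices of $123$ and $3412$ are symmetric about the main diagonal, so these patterns are self-inverse; consequently $\pi$ contains one of them iff $\pi^{-1}$ does, and we may assume $\pi$ itself is almost decreasing. Write $\pi = D_1 D_2$ as the concatenation of at most two maximal decreasing runs. Any increasing subsequence uses at most one entry from each run, so the longest increasing subsequence has length at most two, ruling out a $123$-subpattern. A hypothetical $3412$-subpattern at $p_1<p_2<p_3<p_4$ would provide two disjoint ascending pairs $(p_1,p_2),(p_3,p_4)$; each would have to cross the unique boundary between $D_1$ and $D_2$, forcing $p_1,p_3\in D_1$ and $p_2,p_4\in D_2$, contradicting $p_2<p_3$.

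\emph{The ``$\Rightarrow$'' direction: ascent analysis.} Assume $\pi$ avoids $123$ and $3412$ but is not almost decreasing, so $\pi$ has two adjacent ascents at positions $a_1<a_2$. Using $123$-avoidance one verifies in turn that $a_2\ge a_1+2$, $\pi(a_1)>\pi(a_2)$ and $\pi(a_1+1)>\pi(a_2+1)$, since each failure exhibits an increasing triple among the four values $\pi(a_1),\pi(a_1+1),\pi(a_2),\pi(a_2+1)$. Then $3412$-avoidance applied to the positions $a_1<a_1+1<a_2<a_2+1$ forces $\pi(a_1)<\pi(a_2+1)$, so
\[
\pi(a_2)<\pi(a_1)<\pi(a_2+1)<\pi(a_1+1),
\]
i.e.\ these four positions carry the pattern $2413$.

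\emph{Closing the argument.} Suppose, for contradiction, that $\pi^{-1}$ is also not almost decreasing. Since $123$ and $3412$ are self-inverse, $\pi^{-1}$ also avoids both, and applying the analysis of the previous paragraph to $\pi^{-1}$ produces two values $c_1<c_2$ with $\pi^{-1}(c_2)<\pi^{-1}(c_1)<\pi^{-1}(c_2+1)<\pi^{-1}(c_1+1)$; reading the four values of $\pi$ at these four positions gives $c_2,c_1,c_2+1,c_1+1$, i.e.\ a $3142$ pattern in $\pi$. The remaining task -- and the main obstacle -- is the purely combinatorial claim that a permutation avoiding $123$ and $3412$ cannot simultaneously contain a $2413$ and a $3142$ subpattern. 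I would prove this by case analysis on how the (up to) eight involved positions are interleaved and how the eight involved values compare; crucially, the four positions of the $2413$ witness come in two adjacent pairs and the four values of the $3142$ witness come in two consecutive-integer pairs, which sharply limits the genuinely distinct configurations, and in each one locates either three positions carrying an increasing triple (a $123$) or four positions carrying a $3412$, delivering the desired contradiction.
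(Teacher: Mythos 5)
Your reduction is correct as far as it goes: the backward direction is fine, the ascent analysis correctly shows that a non-almost-decreasing $\pi$ avoiding $123$ and $3412$ contains a $2413$ at two adjacent ascent pairs, and passing to $\pi^{-1}$ correctly yields a $3142$ in $\pi$ whose values come in two consecutive pairs. But the proof stops exactly where the real work begins. The entire content of the hard direction has been repackaged into the claim that a $123$- and $3412$-avoiding permutation cannot contain both such a $2413$ and such a $3142$, and for that claim you offer only a plan (``I would prove this by case analysis \dots''), not an argument. This claim is not weaker than the lemma: granting the lemma, an almost decreasing $\pi$ contains no $2413$ and a $\pi$ with almost decreasing inverse contains no $3142$, so the two statements are essentially equivalent. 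Asserting it without proof therefore leaves the lemma unproved. Moreover, the proposed case analysis is not obviously ``sharply limited'': the two witnesses involve up to eight positions and eight values whose mutual interleavings must all be examined, and nothing in the write-up identifies the increasing triple or $3412$ occurrence that each configuration is supposed to produce.

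For comparison, the paper avoids this double-pattern analysis entirely by inducting on $l$: one first disposes of the cases $\pi(l)=1$ and $\pi(1)=l$ by restriction, and then, assuming $\pi(1)\neq l$ and $\pi(l)\neq 1$, analyzes the relative order of the positions $\pi^{-1}(1)$ and $\pi^{-1}(l)$ of the extreme values. If $\pi^{-1}(l)>\pi^{-1}(1)$, $123$-avoidance forces $\pi^{-1}(1)=\pi^{-1}(l)-1$ and then forces $\pi$ to be decreasing on either side of that single ascent; if $\pi^{-1}(l)<\pi^{-1}(1)$, a direct $3412$ occurrence built from the values $1$ and $l$ rules out $\pi(l)<\pi(1)$, and the first case applied to $\pi^{-1}$ finishes. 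If you want to salvage your route, you must actually carry out and write down the interleaving analysis; otherwise the extremal-value induction is the shorter path.
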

Having this characterization we can now prove the remaining induction step for those $\boldsymbol{\pi} \in \sigma_n$ fulfilling that 
$\boldsymbol{\pi}$ or $\boldsymbol{\pi}^{-1}$ is almost decreasing. Notice that w.l.o.g. we may assume that $\boldsymbol{\pi} \in \sigma_n$ 
is almost decreasing since defining $\boldsymbol{v} \in \Delta_n$ by 
$v_i = u_{\boldsymbol{\pi}^{-1}(i)}$ for every $i \in \{1, \dotsc, n\}$ yields $a_{\boldsymbol{\pi}}(\boldsymbol{u}) = a_{\boldsymbol{\pi}^{-1}}(\boldsymbol{v})$ as well as
$b_{\boldsymbol{\pi}}(\boldsymbol{u}) = b_{\boldsymbol{\pi}^{-1}}(\boldsymbol{v})$. Both subsequent lemmata are therefore only stated and proved for almost decreasing $\boldsymbol{\pi}$.
\begin{lem}
Suppose that $n \geq 3$ and that $b_{\boldsymbol{\omega}} (\boldsymbol{v}) \geq \vartheta(a_{\boldsymbol{\omega}}(\boldsymbol{v}))$ holds for all 
$(\boldsymbol{\omega},\boldsymbol{v}) \in \sigma_m \times \Delta_m$ with $m<n$. If $\boldsymbol{\pi} \in \sigma_n$ is almost decreasing with $\boldsymbol{\pi}(1)=n$ or
$\boldsymbol{\pi}(n)=1$ then $b_{\boldsymbol{\pi}}(\boldsymbol{u}) \geq \vartheta(a_{\boldsymbol{\pi}}(\boldsymbol{u}))$ holds for every $\boldsymbol{u} \in \Delta_n$.
\end{lem}
\begin{proof}
As before we may assume $\boldsymbol{u} \in (0,1)^n$. 
Suppose that $\boldsymbol{\pi}(1) = n$. Defining $(\boldsymbol{\pi}',\boldsymbol{u}') \in \sigma_{n-1} \times \Delta_{n-1}$ by
 $\boldsymbol{\pi}'(i)=\boldsymbol{\pi}(i+1)$ and $u'_{i}=\frac{u_{i+1}}{1-u_1}$ for every $i \in \{1,\ldots,n-1\}$ and considering 
\begin{align*}
a_{\boldsymbol{\pi}'}(\boldsymbol{u}') &= \frac{1}{(1-u_1)^2} \sum_{2 \le i<j\le n: \, \{i,j\}\in I_{\boldsymbol{\pi}}} u_i u_j
\end{align*}
yields that $a_{\boldsymbol{\pi}}(\boldsymbol{u})= (1-u_1)^2 a_{\boldsymbol{\pi}'}(\boldsymbol{u}') + u_1 (1-u_1)$. Analogously, using 
\begin{align*}
b_{\boldsymbol{\pi}'}(\boldsymbol{u}') = \frac{1}{(1-u_1)^3} \sum_{2 \le i<j<k\le n: \, \{i,j,k\}\in Q_{\boldsymbol{\pi}}} u_i u_j u_k\
\end{align*}
we get $b_{\boldsymbol{\pi}}(\boldsymbol{u}) = (1-u_1)^3 b_{\boldsymbol{\pi}'}(\boldsymbol{u}') + u_1 (1-u_1)^2 a_{\boldsymbol{\pi}'}(\boldsymbol{u}')$. 
To simplify notation let $\boldsymbol{\tilde{\pi}}_k$ denote the decreasing permutation in $\sigma_k$ for every $k \in \mathbb{N}$.
Choose $u''_1,...,u''_{n-1} \in \Delta_{n-1}$ such that $a_{\boldsymbol{\tilde{\pi}}_{n-1}}(\boldsymbol{u}'')=a_{\boldsymbol{\pi}'}(\boldsymbol{u}')$ and 
$b_{\boldsymbol{\tilde{\pi}}_{n-1}}(\boldsymbol{u}'')=\vartheta(a_{\boldsymbol{\pi}'}(\boldsymbol{u}'))$. Define $\boldsymbol{\tilde{u}}=(\tilde{u}_1,...,\tilde{u}_n)$ 
by $\tilde{u}_1=u_1$ and $\tilde{u}_i=(1-u_1)u''_{i-1}$ for every $i\in \{2,\ldots,n\}$. Then
 $\sum_{i=1}^n \tilde{u}_i=u_1+(1-u_1)\sum_{i=1}^{n-1}u''_i =1$ and we get
\begin{align*}
a_{\boldsymbol{\tilde{\pi}}_n}(\boldsymbol{\tilde{u}}) &=(1-u_1)^2 a_{\boldsymbol{\tilde{\pi}}_{n-1}}(\boldsymbol{u}'') + u_1 (1-u_1) = a_{\boldsymbol{\pi}}(\boldsymbol{u})
\end{align*}
as well as
\begin{align*}
b_{\boldsymbol{\tilde{\pi}}_n}(\boldsymbol{\tilde{u}}) &=  \sum_{1<i<j<k: \, \{i,j,k\}\in Q_{\boldsymbol{\tilde{\pi}}_n}} \tilde{u}_i \tilde{u}_j 
\tilde{u}_k + \tilde{u}_1 \sum_{1<j<k: \, \{j,k\}\in I_{\boldsymbol{\tilde{\pi}}_n}} \tilde{u}_i \tilde{u}_j\\
&= (1-u_1)^3 b_{\boldsymbol{\tilde{\pi}}_{n-1}}(\boldsymbol{u}'')+ u_1 (1-u_1)^2 a_{\boldsymbol{\tilde{\pi}}_{n-1}}(\boldsymbol{u}'').
\end{align*}
Altogether this yields
\begin{align*}
b_{\boldsymbol{\pi}}(\boldsymbol{u}) &= (1-u_1)^3 b_{\boldsymbol{\pi}'}(\boldsymbol{u}') + u_1 (1-u_1)^2 a_{\boldsymbol{\pi}'}(\boldsymbol{u}')\\
&\ge (1-u_1)^3 \vartheta( a_{\boldsymbol{\pi}'}(\boldsymbol{u}')) + u_1 (1-u_1)^2 a_{\boldsymbol{\pi}'}(\boldsymbol{u}')\\
&= (1-u_1)^3 b_{\boldsymbol{\tilde{\pi}}_{n-1}}(\boldsymbol{u}'') + u_1 (1-u_1)^2 a_{\boldsymbol{\boldsymbol{\tilde{\pi}}}_{n-1}}(\boldsymbol{u}'')= b_{\boldsymbol{\tilde{\pi}}_n}(\boldsymbol{\tilde{u}}) \\
&\geq  \vartheta( a_{\boldsymbol{\tilde{\pi}}_n}(\boldsymbol{\tilde{u}}))= \vartheta(a_{\boldsymbol{\pi}}(\boldsymbol{u})).
\end{align*}
The proof of the case $\boldsymbol{\pi}(n)=1$ is completely analogous.
\end{proof}
The following final lemma assures that in case of almost decreasing $\boldsymbol{\pi} \in \sigma_n$ with 
$\boldsymbol{\pi}(1)\not=n$ and $\boldsymbol{\pi}(n)\not=1$ we cannot be on the boundary of $\Omega_\Phi$. Note that in the proof we do not make
use of the induction hypothesis.  
\begin{lem}\label{lastlem}
Suppose that $n \geq 3$ and that $\boldsymbol{\pi} \in \sigma_n$ is almost decreasing with $\boldsymbol{\pi}(1)\not=n$ and
$\boldsymbol{\pi}(n)\not=1$. Then for every $\boldsymbol{u} \in \Delta_n \cap (0,1)^n$ we have  
\begin{equation*}
b_{\boldsymbol{\pi}}(\boldsymbol{u})-\vartheta(a_{\boldsymbol{\pi}}(\boldsymbol{u})) > \min\Big\{b_{\boldsymbol{\omega}}(\boldsymbol{v})-\vartheta(a_{\boldsymbol{\omega}}(\boldsymbol{\boldsymbol{\pi}})): {\boldsymbol{\omega}} \in \sigma_n, \, \boldsymbol{v} \in \Delta_n \Big\}
\end{equation*}
\end{lem}
\begin{proof}
First note that the existence of the minimum is assured by the fact that $\sigma_n$ is finite and $\Delta_n$ is compact. 
Set $k := \boldsymbol{\pi}^{-1}(1)$. Then $1 = \boldsymbol{\pi}(k) < \boldsymbol{\pi}(k-1) < \dotsm < \boldsymbol{\pi}(1) < n$ and $1 < \boldsymbol{\pi}(n)< \dotsm < \boldsymbol{\pi}(k+2) < \boldsymbol{\pi}(k+1)$, 
so $\boldsymbol{\pi}(k+1) = n$.
Define $(\boldsymbol{\pi}',\boldsymbol{u}') \in \sigma_n \times \Delta_n$ as follows: 
$\boldsymbol{\pi}'(i) = \boldsymbol{\pi}(i)$ for $i \notin \{k, k+1\}$, $\boldsymbol{\pi}'(k) = \boldsymbol{\pi}(k+1) = n$ and 
$\boldsymbol{\pi}'(k+1) = \boldsymbol{\pi}(k) = 1$; $u'_i = u_i$ for $i \notin \{k,k+1\}$, $u'_k = u_{k+1}$ and $u'_{k+1} = u_k$.
Then it is straightforward to verify that $a_{\boldsymbol{\pi}'}(\boldsymbol{u}') - a_{\boldsymbol{\pi}}(\boldsymbol{u}) = u_k u_{k+1} $
and
\[
b_{\boldsymbol{\pi}'}(\boldsymbol{u}') - b_{\boldsymbol{\pi}}(\boldsymbol{u}) = - \sum_{i \neq k, k+1} u_k u_{k+1} u_i,
\]
holds, which, considering that $n \ge 3$ implies $a_{\boldsymbol{\pi}'}(\boldsymbol{u}') > a_{\boldsymbol{\pi}}(\boldsymbol{u})$ and $b_{\boldsymbol{\pi}'}(\boldsymbol{u}') < b_{\boldsymbol{\pi}}(\boldsymbol{u})$.
Having this we get $b_{\boldsymbol{\pi}}(\boldsymbol{u}) - \vartheta(a_{\boldsymbol{\pi}}(\boldsymbol{u})) > b_{\boldsymbol{\pi}'}(\boldsymbol{u}') - \vartheta(a_{\boldsymbol{\pi}'}(\boldsymbol{u}'))$ since 
$\vartheta$ is non-decreasing, which completes the proof.
\end{proof}
Since Lemma \ref{lastlem} implies that in order to prove inequality (\ref{finalineq}) for every $\boldsymbol{\pi} \in \sigma_n$ 
and $\boldsymbol{u} \in \Delta_n$ it is not necessary to consider almost decreasing permutations $\boldsymbol{\pi}$ with $\boldsymbol{\pi}(1)\not=n$ 
and $\boldsymbol{\pi}(n)\not=1$
the proof of Theorem \ref{mainresalt} (hence the one of Theorem \ref{mainres}) is complete.  
\section{Additional related results}
So far we have shown that $\Omega \subseteq \Omega_\Phi$. We now prove that the two sets are in fact identical.
\begin{thm}\label{identical}
The precise $\tau$-$\rho$ region $\Omega$ coincides with $\Omega_\Phi$. $\Omega$ is not convex.
\end{thm}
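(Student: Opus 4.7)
Given Theorem~\ref{mainres}, what remains for Theorem~\ref{identical} is the reverse inclusion $\Omega_\Phi\subseteq\Omega$, after which non-convexity is a short geometric observation. My plan is to exhibit, for every $(x,y)\in\Omega_\Phi$, a mutually completely dependent copula with $(\tau,\rho)=(x,y)$: first the boundary $\partial\Omega_\Phi$ via prototypes, then the interior via a deformation argument.

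The boundary is immediate from Lemma~\ref{formulaproto}. On each subinterval $[\tfrac{2-n}{n},\tfrac{2-(n-1)}{n-1}]$ with $n\ge 2$, the polynomial $r\mapsto 1-4(n-1)r+2r^2n(n-1)$ is a continuous increasing bijection from $[\tfrac{1}{n},\tfrac{1}{n-1}]$ onto that subinterval, so for every $x$ in it the corresponding prototype is a mutually completely dependent copula realising $(x,\Phi_n(x))=(x,\Phi(x))$. Concatenating over $n\ge 2$ produces the entire lower graph of $\Phi$, and the upper graph $\{(x,-\Phi(-x))\}$ follows from the $(0,0)$-symmetry $A\mapsto\hat A$, which preserves $\kc_d$ (via $A_h\mapsto A_{1-h}$) and sends $(\tau,\rho)$ to $(-\tau,-\rho)$. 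To cover the interior, fix $x_0\in(-1,1)$ and $y_0\in(\Phi(x_0),-\Phi(-x_0))$; I would construct a continuous family $t\mapsto h_t\in\mathcal{S}$ joining the lower-boundary prototype at $\tau=x_0$ to its symmetric upper-boundary counterpart, along which $\tau(A_{h_t})\equiv x_0$ while $\rho(A_{h_t})$ varies continuously, and then apply the intermediate value theorem to place $(x_0,y_0)$ in $\Omega$. Locally, such deformations are produced by the perturbation formulas of Lemma~\ref{pertab}: after refining the number of pieces if necessary, one can choose a direction $\delta$ on $\Delta_n$ with $\sum_i\delta_i=0$ that kills both the first- and second-order variation of $\inv$ (so $\tau$ is preserved) but leaves the variation of $\invs$ non-zero (so $\rho$ moves strictly), exploiting that $\inv$ and $\invs$ are not proportional as polynomials in $u$.

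The main obstacle is precisely this global deformation: stringing the infinitesimal $\tau$-preserving perturbations into a single continuous path that stays within $\kc_d$ and sweeps $\rho$ over the entire interval $[\Phi(x_0),-\Phi(-x_0)]$. Once $\Omega=\Omega_\Phi$ is in hand, non-convexity follows immediately. A direct computation gives $\Phi_3''(x)=-\tfrac{3}{8}(1+3x)^{-1/2}<0$ for $x\in(-\tfrac{1}{3},0]$, so $\Phi_3$ is strictly concave there. Consequently the open chord joining $p_2=(0,-\tfrac{1}{2})$ and $p_3=(-\tfrac{1}{3},-\tfrac{7}{9})$ lies strictly below the graph of $\Phi$ and hence outside $\Omega_\Phi=\Omega$, while both endpoints lie in $\Omega$ by the prototype construction; thus $\Omega$ fails to be convex.
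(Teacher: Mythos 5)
Your treatment of the boundary (prototypes for the lower graph of $\Phi$, the symmetry $A\mapsto\hat A$ for the upper graph) and your non-convexity argument (strict concavity of $\Phi_3$, so the open chord from $p_2$ to $p_3$ lies below the graph of $\Phi$ and outside $\Omega_\Phi$) are both correct. But the heart of the theorem is the reverse inclusion for \emph{interior} points, and there your argument has a genuine gap that you yourself flag as ``the main obstacle'': you never actually construct the continuous, $\tau$-preserving family $t\mapsto h_t$ of shuffles joining the lower-boundary prototype at $\tau=x_0$ to its upper-boundary counterpart. The local perturbation formulas of Lemma~\ref{pertab} give, at best, infinitesimal directions $\delta$ with $\alpha_1=\alpha_2=0$ and nonzero first-order effect on $\invs$; turning these into a single global path requires controlling what happens when coordinates of $u$ hit $0$ or when the admissible directions degenerate, and requires passing between different permutations $\pi$ (which is a discrete, not continuous, datum). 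None of this is carried out, and it is not obvious how to do it. Moreover your claim that one can always find $\delta$ killing both $\alpha_1$ and $\alpha_2$ while moving $\invs$ is itself unproved. As written, the interior case --- which is the entire content of $\Omega_\Phi\subseteq\Omega$ beyond the boundary --- is a plan, not a proof.

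The paper closes exactly this gap by a different, global topological argument that avoids $\tau$-constant paths altogether. It takes the closed boundary curve $\gamma(t)=(\tau(A_t),\rho(A_t))$ traced by the prototypes and their reflections, and contracts it inside $\Omega$ using ordinal sums: for $s\in[0,1]$ the operator $O_s$ places a rescaled copy of $M$ on $[0,s]^2$ and a rescaled copy of $A$ on $[s,1]^2$, so that $H(s,t)=\bigl(\tau(O_s(A_t)),\rho(O_s(A_t))\bigr)$ is a continuous homotopy from $\gamma$ to the constant curve at $(1,1)$, with image contained in $\Omega$. Since a Jordan curve that is null-homotopic in a subset of the plane must have its interior contained in that subset (winding-number argument), this yields $\Omega_\Phi\subseteq\Omega$ in one stroke, and as a bonus every point of $\Omega$ is realized by a shuffle. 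If you want to salvage your approach you would need to make the $\tau$-constant deformation rigorous; otherwise you should replace it with a contraction argument of this kind.
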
 
\begin{proof}
The construction of $\Phi$ implies the existence of a family $(A_t)_{t \in [0,1]}$ of shuffles of $M$ fulfilling that
the map $t\mapsto A_t$ is continuous on $[0,1]$ (w.r.t. $d_\infty$) and that
\begin{equation}\label{gamma}
\gamma(t):=(\tau(A_t),\rho(A_t)) = \left\{
\begin{array}{ll}
(4t-1,\Phi(4t-1)) & \text{if } t \in [0,\tfrac{1}{2}]\\
(3-4t,-\Phi(4t-3)) & \text{if } t \in [\tfrac{1}{2},1].
\end{array} \right.
\end{equation} 
Obviously the curve $\gamma:[0,1] \rightarrow [-1,1]^2$ is simply closed and rectifiable. For every $s \in [0,1]$ consider the
similarities $f_s,g_s:[0,1]^2 \rightarrow [0,1]^2$, given by $f_s(x,y)=s(x,y)$ and $g_s(x,y)=(1-s)(x,y)+(s,s)$ and 
define the operator $O_s: \kc \rightarrow \kc$ implicitly via 
\begin{align*}
\mu_{O_s(A)}= s \mu_M^{f_s} \,+\, (1-s) \mu_A^{g_s}. 
\end{align*}  
Notice that $O_s(A)$ is usually referred to as the ordinal sum of $M,A$ with respect to the partition $[0,s),(s,1]$, see \cite{DuS}. 
Then we have $d_\infty(O_s(A),O_s(B))\leq d_\infty(A,B)$ for all $A,B \in \kc$ and every $s \in [0,1]$, and the 
mapping $s \mapsto O_s(A)$ is continuous for every $A \in \kc$. 
Consequently, the function $H:[0,1]^2 \rightarrow [-1,1]^2$, given by 
\begin{align*}
H(s,t)=\big(\tau(O_s(A_t)), \rho(O_s(A_t))\big)
\end{align*}  
is continuous and fulfils, firstly, that $H(0,t)=\gamma(t)$ and $H(1,t)=(1,1)$ for every $t \in [0,1]$ and, secondly, 
that $H(s,0)=H(s,1)$ for all $s \in [0,1]$. In other words, $H$ is a homotopy and $\gamma$ is homotopic to the
point $(1,1)$ (see Figure \ref{homotopy_plot}), implying $\Omega=\Omega_\Phi$. 
Since $\Phi$ is strictly concave on each interval 
$[\tfrac{2-n}{n},\tfrac{2-(n-1)}{n-1}]$ with $n\geq 3$, $\Omega=\Omega_\Phi$ cannot be convex.  
\end{proof}
Considering that the operator $O_s: \kc \rightarrow \kc$ maps the family of all shuffles of $M$ into itself for every
$s\in[0,1]$ the proof of Theorem \ref{identical} has the following surprising byproduct:
\begin{cor}
For every point $(x,y) \in \Omega$ there is a shuffle $h \in \mathcal{S}$ such that we have $(\tau(A_h),\rho(A_h))=(x,y)$.
\end{cor}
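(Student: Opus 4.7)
The proof should piggyback on the homotopy $H$ constructed in the proof of Theorem \ref{identical}. My plan is to observe that for each pair $(s,t)\in[0,1]^2$ the copula $O_s(A_t)$ whose image under $(\tau,\rho)$ defines $H(s,t)$ is already a shuffle of $M$, and then to argue that the image of $H$ is exactly $\Omega$.

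First I would verify that the ordinal sum operator $O_s$ preserves the class of shuffles of $M$. Recall that each $A_t$ is by construction a shuffle of $M$, so its mass is supported on finitely many line segments of slope $\pm 1$ inside $[0,1]^2$, each corresponding to a piece of a bijective $\lambda$-preserving transformation $h_t\in\mathcal{T}_b$ whose derivative is $\pm 1$. The definition $\mu_{O_s(A)}=s\mu_M^{f_s}+(1-s)\mu_A^{g_s}$ says that $\mu_{O_s(A_t)}$ places mass $s$ uniformly on the diagonal segment of $[0,s]^2$ (a single slope-$+1$ segment) and mass $1-s$ on the image of the slope-$\pm 1$ support of $\mu_{A_t}$ under the similarity $g_s$, which preserves slopes $\pm 1$. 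Thus the support of $\mu_{O_s(A_t)}$ is again a finite union of slope-$\pm 1$ segments, arranged so that their projections partition $[0,1]$ in each coordinate. In other words, $O_s(A_t) = A_{\tilde h}$ for some $\tilde h \in \mathcal{S}$.

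Next I would invoke what the proof of Theorem \ref{identical} has already established: the continuous map $H:[0,1]^2\to[-1,1]^2$ defined by $H(s,t)=(\tau(O_s(A_t)),\rho(O_s(A_t)))$ is a homotopy whose image contains $\gamma([0,1])=\partial\Omega_\Phi$ (its boundary traced out) and whose top edge is the constant $(1,1)$, showing by the homotopy argument that $\Omega_\Phi$ equals $\Omega$ and, more relevantly here, that every point of $\Omega$ lies in the image of $H$. Combining this with the previous paragraph: given any $(x,y)\in\Omega$, pick $(s,t)\in[0,1]^2$ with $H(s,t)=(x,y)$ and set $h:=\tilde h$ as constructed above, so that $A_h=O_s(A_t)$ and $(\tau(A_h),\rho(A_h))=(x,y)$.

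The only genuinely new content beyond quoting Theorem \ref{identical} is the stability of the shuffle class under $O_s$, and that is the step I would write out carefully; everything else is a direct unpacking of the homotopy already built. I do not expect any real obstacle: the slopes of the support segments are preserved by $g_s$, and the diagonal piece contributed by $\mu_M^{f_s}$ is itself the graph of a shuffle on $[0,s]$, so gluing the two pieces gives a bijective piecewise-isometric rearrangement of $[0,1]$ onto itself, i.e.\ an element of $\mathcal{S}$.
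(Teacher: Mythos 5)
Your proposal is correct and follows essentially the same route as the paper: the paper derives the corollary precisely by observing that $O_s$ maps shuffles of $M$ into shuffles of $M$ and that the homotopy $H$ from the proof of Theorem \ref{identical} is built from copulas $O_s(A_t)$ whose image covers all of $\Omega$. Your added detail on why the similarities $f_s,g_s$ preserve the slope-$\pm 1$ support structure is a correct elaboration of the step the paper leaves implicit.
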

Additionally, Theorem \ref{identical} also implies the following result concerning the possible range of Spearman's $\rho$ 
if Kendall's $\tau$ is known (and vice versa):
\begin{cor}
Suppose that $X,Y$ are continuous random variables with $\tau(X,Y)=\tau_0$.
Then $\rho(X,Y) \in [\Phi(\tau_0),-\Phi(-\tau_0)]$. 
\end{cor}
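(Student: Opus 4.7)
The plan is to read this off directly from Theorem \ref{identical} together with the definition of $\Omega_\Phi$. The corollary is essentially a restatement of the inclusion $\Omega \subseteq \Omega_\Phi$ projected onto vertical slices, so no new machinery is needed; the only work is to verify that fixing the first coordinate of a point in $\Omega_\Phi$ produces exactly the claimed interval.

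First, let $A \in \kc$ denote the (uniquely determined, by continuity of $F$ and $G$) copula of $(X,Y)$. By the definitions \eqref{deftau} and \eqref{defrho} of $\tau(A)$ and $\rho(A)$ together with scale invariance, we have $(\tau(X,Y), \rho(X,Y)) = (\tau(A), \rho(A))$, and by \eqref{defregionall} this point lies in $\Omega$. Theorem \ref{identical} then gives $(\tau_0, \rho(X,Y)) \in \Omega_\Phi$.

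Next, I appeal to the definition \eqref{omegaphi} of $\Omega_\Phi$: a pair $(x,y) \in [-1,1]^2$ belongs to $\Omega_\Phi$ precisely when $\Phi(x) \leq y \leq -\Phi(-x)$. Substituting $x = \tau_0$ and $y = \rho(X,Y)$ yields the required inequality $\Phi(\tau_0) \leq \rho(X,Y) \leq -\Phi(-\tau_0)$.

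There is no genuine obstacle here; the entire content has been done in Theorem \ref{identical}. One should perhaps only remark (for the reader's reassurance) that the interval $[\Phi(\tau_0), -\Phi(-\tau_0)]$ is nonempty, which is clear because $\Phi$ is a strictly increasing homeomorphism of $[-1,1]$ with $\Phi(0) = -\tfrac{1}{2}$ (so in particular $\Phi(x) \leq -\Phi(-x)$ throughout $[-1,1]$), and moreover that by the preceding Corollary the bounds are actually attained by shuffles of $M$, so the interval cannot be shrunk.
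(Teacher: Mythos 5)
Your proposal is correct and matches the paper's (implicit) argument: the corollary is stated as an immediate consequence of Theorem \ref{identical} (indeed only the inclusion $\Omega\subseteq\Omega_\Phi$ is needed), obtained by reading off the vertical slice of $\Omega_\Phi$ at $x=\tau_0$ from the definition in eq. (\ref{omegaphi}). The additional remarks on nonemptiness and attainment are harmless but not required.
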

\begin{rmk}
\emph{Due to the simple analytic form of $\Phi$ it is straightforward to verify that
\begin{align*}
\lambda_2(\Omega)=\frac{4}{5}-\frac{4}{5}\, \zeta(3)+\frac{2}{15} \pi^2 \approx 1.1543,
\end{align*}
whereby $\zeta(3)=\sum_{i=1}^\infty \tfrac{1}{i^3}$. Considering that $\lambda_2(\Omega_0)=\tfrac{7}{6}\approx 1.1667$ this underlines the 
quality of the classical inequalities. 
}
\end{rmk} 
\begin{figure}[H]
\centering
\makebox{\includegraphics[width=12cm]{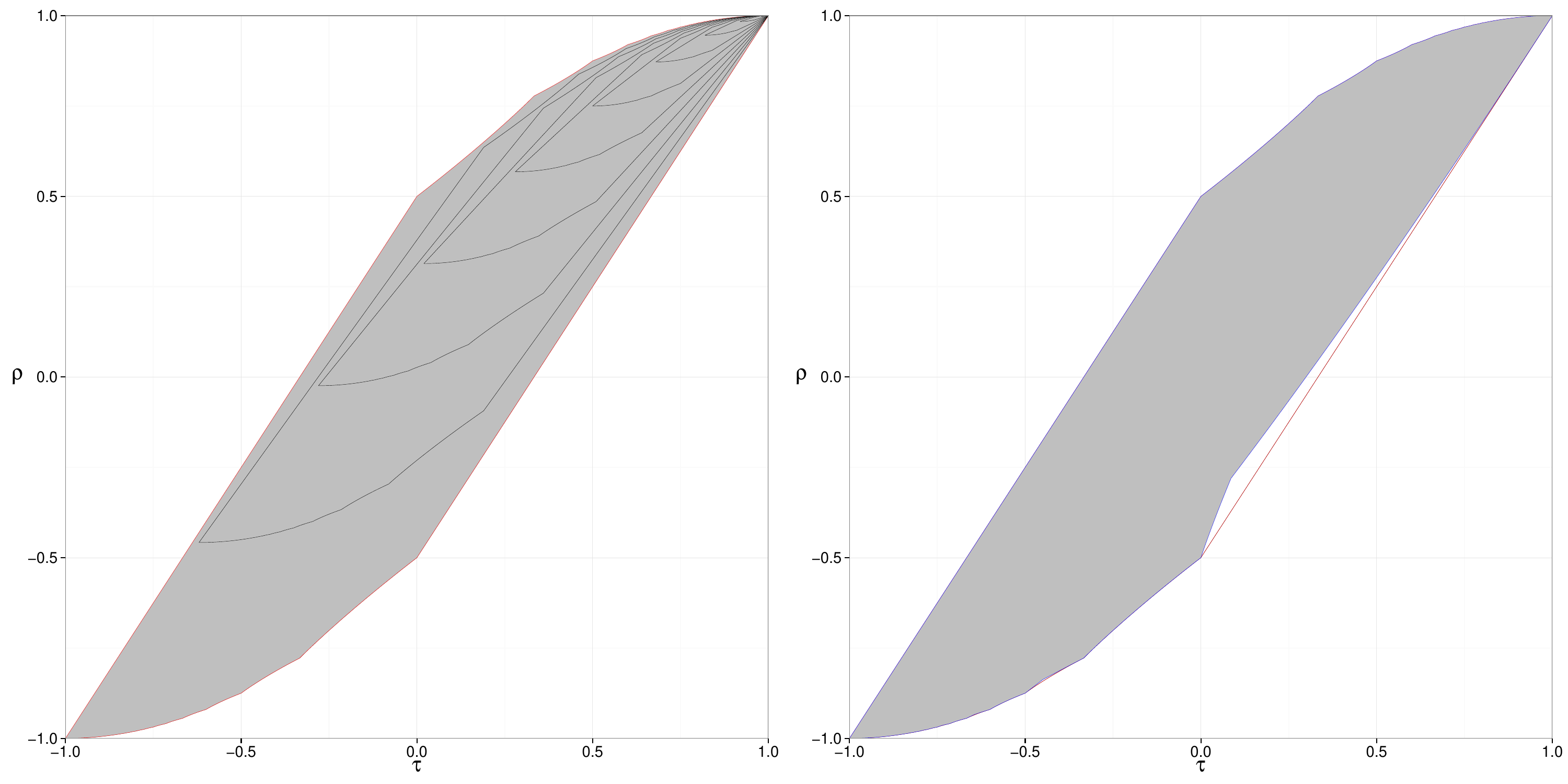}}\caption{The curves $\gamma_s(t)=H(s,t)$ for 
$t \in [0,1]$ and $s \in \{\tfrac{1}{10},\ldots,\tfrac{9}{10}\}$ with 
$H$ being the homotopy used in the proof of Theorem \ref{identical} (left panel); the region corresponding to conjecture \textbf{(C1)} for the
class of exchangeable copulas (right panel). The red lines depict the boundary of $\Omega$.}
\label{homotopy_plot}
\end{figure}

\section{Conclusions and future work}
Although Kendall's $\tau$ and Spearman's $\rho$ are both measures of concordance, they quantify different aspects of the dependence structure \citep{FreNe}.
The results established in this paper do not only answer the open question 
about how much they can differ in a definitive manner, they also show for which dependence structures the discrepancy is maximal and, more surprisingly, that 
mutually completely dependent random variables cover all possible joint values of $\tau$ and $\rho$. Although mutual complete dependence might be considered a highly atypical 
dependence structure, it naturally appears in various practical settings, in particular in optimization problems like for instance in worst-case value-at-risk scenarios \citep{Mak}. 

To the best of the authors' knowledge the combination of combinatorical and continuity arguments used in the proofs is novel. The authors conjecture that this combination may also
prove useful in various other problems - in particular concerning open questions in complete mixability, where many important recent results are 
based on combinatorical arguments and discretizations \citep{Wa}.

Having at hand a full characterization of $\Omega$ the question naturally arises, how much $\tau(A)$ and $\rho(A)$ may differ if 
it is known that $A \in  \kc$ is an element of a given subclass of copulas. 
The authors are convinced that the problem of determining the exact $\tau$-$\rho$ region for subclasses may in some cases be even more difficult than
answering the sixty year old question about $\Omega$ has been, in particular for classes where there are no simple explicit formulas for $\tau$ or $\rho$.
Based on numerous simulations and analytic calculations the authors conjecture that the following inequalities hold for the class of exchangeable copulas $\kc_{ex}$,
the class of extreme-value-copulas $\kc_{ev}$ and the class of Archimedean copulas $\kc_{ar}$:
\vspace{-4mm}
\begin{enumerate}
\item[\textbf{(C1)}] Conjectured (sharp) inequality for $A \in \kc_{ex}$: $L_{ex}(\tau) \leq \rho \leq -\Phi(-\tau)$, where\\[2mm]
\small{
\hspace{-3mm}
$L_{ex}(\tau)= \begin{cases}
   \Phi(\tau) & \text{if }\tau \in \big[\tfrac{2-n}{n}, \tfrac{2-(n-1)}{n-1}\big]\text{, $n \geq 3$ odd,}\\
   \begin{matrix}
  \min\bigg\{\frac{3 \tau}{1 + n} -\frac{2 + (n-1) n}{(1 + n)^2}  - \frac{(n-3) \left(-1 + n + (1 + n) \tau\right)^{3/2}}{2 (n+1)^2 \sqrt{n-1} },\\
  \frac{3 \tau}{n} -\frac{(n-2)^2 + n}{n^2} - \frac{(n-4) (-2 + n + n \tau)^{3/2}}{2 n^2 \sqrt{n-2} }\bigg\}
 \end{matrix}, & \text{if }\tau \in \big[\tfrac{2-n}{n}, \tfrac{2-(n-1)}{n-1}\big]\text{, $n \geq 3$ even,}\\
   \tfrac{-1 + 6\tau - 3\tau^{3/2}}{2}, & \text{if }\tau \in \big[0,\tfrac{3 - 2\sqrt{2}}{2}\big]\text{,}\\
   -\tfrac{4}{9} + \tau + \tfrac{\left(1 + 3\tau\right)^{3/2}}{18}, & \text{if }\tau \in \big[\tfrac{3 - 2\sqrt{2}}{2},1\big]\text{.}
 \end{cases} $ 
 }
 \vspace{2mm}
 \normalsize
\item[\textbf{(C2)}] Conjectured inequality for $A \in \kc_{ev}$: $L_{ev}(\tau) \leq \rho \leq R_{ev}(\tau)$, where
   $L_{ev}(\tau)=\frac{3 \tau}{2+ \tau}$ and $R_{ev}(\tau)= \frac{3 \tau}{2+ \tau^3} - \frac{1}{3}(1-\tau)^2 \tau^4$  
\item[\textbf{(C3)}] Conjectured inequality for $A \in \kc_{ar}$: $L_{ar}(\tau) \leq \rho < -\Phi(-\tau)$, where
   $L_{ar}(\tau)=\frac{7 \tau - 2\tau^3}{5} \mathbf{1}_{[0,1]}(\tau) +  \frac{31 \tau - 11\tau^3}{20} \mathbf{1}_{[-1,0)}(\tau)$
\end{enumerate}
 Notice that the set determined by the inequalities in (C2) is strictly contained in the region determined by the famous Hutchinson-Lai inequalities \citep{Hue}.
 Furthermore we remark that 
 we were able to prove that for each $x \in [-1,1]$ there exists an exchangeable copula $A \in \kc_{ex}$ with $(\tau(A),\rho(A))=(x,L_{ex}(x))$ and a 
 sequence $(A_n)_{n \in \mathbb{N}}$ of Archimedean copulas such that $(\tau(A_n),\rho(A_n)) \rightarrow (x,-\Phi(-x))$ for
 $n \rightarrow \infty$. The right-hand side of Figure \ref{homotopy_plot} and Figure \ref{conjectures} depict the corresponding regions.  

\begin{figure}[h!]   
\centering
\makebox{\includegraphics[width=12cm]{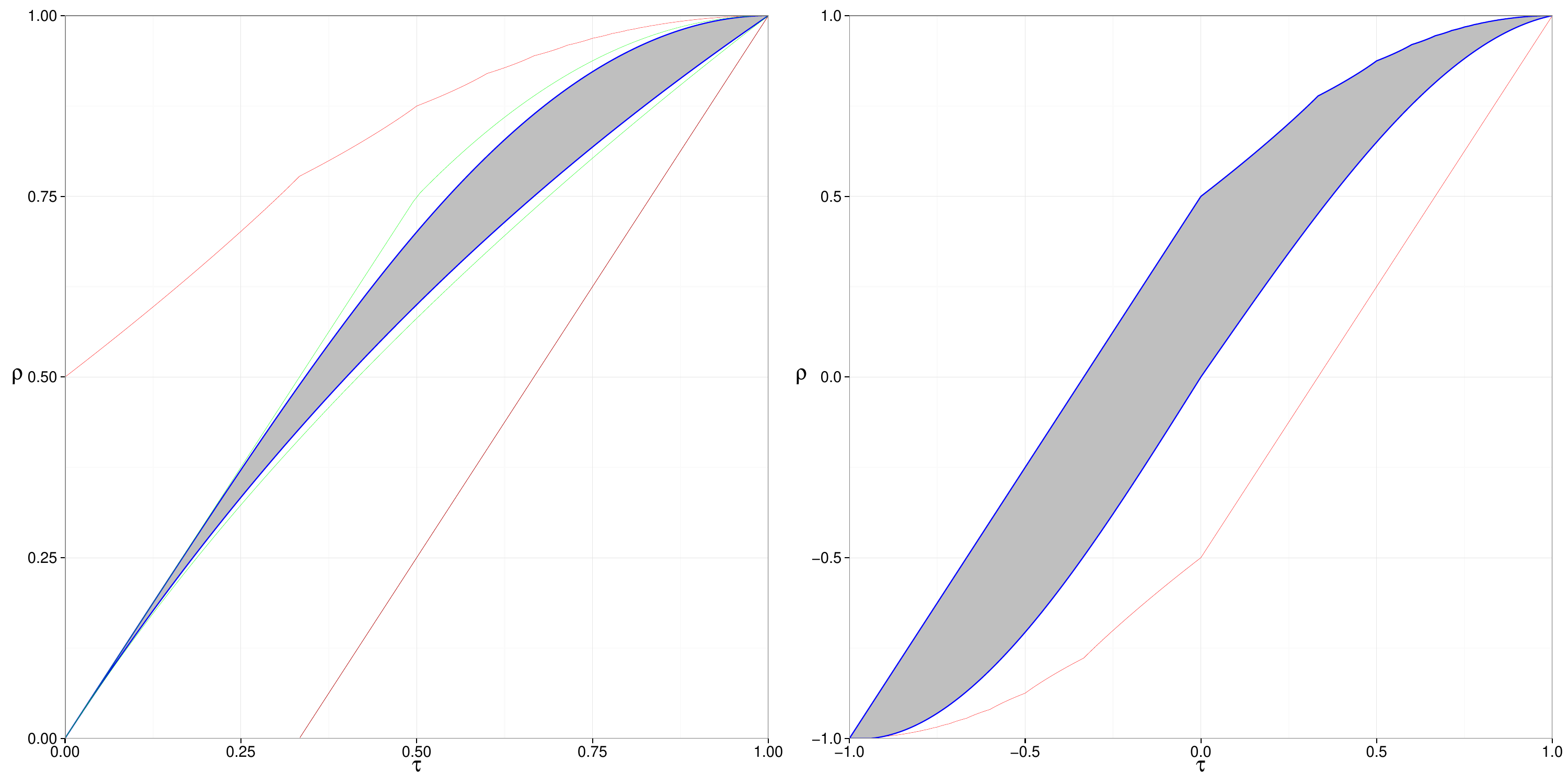}}\caption{The region corresponding to conjecture \textbf{(C2)} for the
class of extreme-value copulas (left panel) and the one corresponding to conjecture \textbf{(C3)} for the
class of Archimedean copulas (right panel); the green lines depict the Hutchinson-Lai inequalities, the red ones the boundary of $\Omega$.}
\label{conjectures}
\end{figure}
   

\section{Appendix}
\begin{proof}[of Lemma \ref{invexplicit}]
Since it is straightforward to verify the first inequality we start with the proof of the second one.
Using $s_j^2-s_{j-1}^2=(s_j-s_{j-1})(s_j+s_{j-1})=u_j(2\sum_{k<j}u_k + u_j)$ we get
\begin{align*}\allowdisplaybreaks[4]
 \invs(h_{\boldsymbol{\pi},\boldsymbol{u}}) &= \sum_{j=1}^n \int_{s_{j-1}}^{s_j}  d\lambda(y)  \int_0^1 \mathbf{1}_{[0,y)}(x) 
 \mathbf{1}_{(h_{\boldsymbol{\pi},\boldsymbol{u}}(y),1]} (h_{\boldsymbol{\pi},\boldsymbol{u}}(x))(y-x)
     \, d\lambda(x) \,\\
 &= \sum_{j=1}^n \int_{s_{j-1}}^{s_j} d\lambda(y) \sum_{i: \, i<j, \atop  \boldsymbol{\pi}(i)>\boldsymbol{\pi}(j)} \int_{s_{i-1}}^{s_{i}}  (y-x) \, d\lambda(x)
       \,  \\
 &= \sum_{j=1}^n \sum_{i: \, i<j, \atop  \boldsymbol{\pi}(i)>\boldsymbol{\pi}(j)} \int_{s_{j-1}}^{s_j}  
    \left(y \, u_i-\frac{1}{2}u_i^2 - u_i \sum_{k:\,k<i}u_k \right)  \, d\lambda(y) \\
 &= \sum_{j=1}^n \sum_{i: \, i<j, \atop  \boldsymbol{\pi}(i)>\boldsymbol{\pi}(j)} \left( \frac{1}{2} u_i u_j 
    \Big(2\sum_{l: \, l<j} u_l + u_j\Big) -\frac{1}{2} u_i^2 u_j - u_i u_j \sum_{k: \, k<i} u_k \right) \\
 &= \sum_{i<j: \atop \boldsymbol{\pi}(i)>\boldsymbol{\pi}(j)} \left( u_i u_j \sum_{l: \, l<j} u_l + 
     \frac{1}{2} u_i u_j^2 -\frac{1}{2} u_i^2 u_j - u_i u_j \sum_{k: \, k<i} u_k \right)\\
 &= \sum_{i<j: \atop \boldsymbol{\pi}(i)>\boldsymbol{\pi}(j)} \left( \frac{1}{2} u_i u_j^2  + \frac{1}{2}u_i^2 u_j + \sum_{k: \, i<k<j}u_i u_j u_k \right). 
\end{align*}
The third equality follows from
\begin{eqnarray*}
b_{\boldsymbol{\pi}}(\boldsymbol{u}) &=& \inv(h_{\boldsymbol{\pi}, \boldsymbol{u}}) - 2\invs(h_{\boldsymbol{\pi}, \boldsymbol{u}})\\
&=& \sum_{k < i < j, \, \boldsymbol{\pi}(i) > \boldsymbol{\pi}(j)} u_i u_j u_k + \sum_{i < k < j, \, \boldsymbol{\pi}(i) > \boldsymbol{\pi}(j)} u_i u_j u_k + 
   \sum_{i < j < k, \, \boldsymbol{\pi}(i) > \boldsymbol{\pi}(j)} u_i u_j u_k   \\
&\phantom{=}& + \sum_{i < j, \, \boldsymbol{\pi}(i) > \boldsymbol{\pi}(j)} u_i^2 u_j + \sum_{i < j, \, \boldsymbol{\pi}(i) > \boldsymbol{\pi}(j)} u_i u_j^2\\
&\phantom{=}& - \sum_{i < j, \, \boldsymbol{\pi}(i) > \boldsymbol{\pi}(j)} u_i^2 u_j - \sum_{i < j, \, \boldsymbol{\pi}(i) > \boldsymbol{\pi}(j)} u_i u_j^2 -  2 \sum_{i < k < j, \, \boldsymbol{\pi}(i) > \boldsymbol{\pi}(j)} u_i u_j u_k  \\
&=& \sum_{i<j<k, \, \boldsymbol{\pi}(j)>\boldsymbol{\pi}(k)} u_i u_j u_k + \sum_{i<j<k, \, \boldsymbol{\pi}(i)>\boldsymbol{\pi}(j)} u_i u_j u_k - \sum_{i<j<k, \, \boldsymbol{\pi}(i) > \boldsymbol{\pi}(k)} u_i u_j u_k \\
&=& \sum_{i<j<k, \atop  \boldsymbol{\pi}(i)>\boldsymbol{\pi}(j)>\boldsymbol{\pi}(k)} u_i u_j u_k + \sum_{i<j<k, \atop  \boldsymbol{\pi}(j)>\boldsymbol{\pi}(i)>\boldsymbol{\pi}(k)} u_i u_j u_k + \sum_{i<j<k, \atop  \boldsymbol{\pi}(j)>\boldsymbol{\pi}(k)>\boldsymbol{\pi}(i)} u_i u_j u_k\\
&\phantom{=}& +  \sum_{i<j<k,\atop  \boldsymbol{\pi}(k)>\boldsymbol{\pi}(i)>\boldsymbol{\pi}(j)} u_i u_j u_k + \sum_{i<j<k, \atop  \boldsymbol{\pi}(i)>\boldsymbol{\pi}(k)>\boldsymbol{\pi}(j)} u_i u_j u_k + \sum_{i<j<k, \atop  \boldsymbol{\pi}(i)>\boldsymbol{\pi}(j)>\boldsymbol{\pi}(k)} u_i u_j u_k\\
&\phantom{=}& - \sum_{i<j<k, \atop  \boldsymbol{\pi}(j)>\boldsymbol{\pi}(i) > \boldsymbol{\pi}(k)} u_i u_j u_k - \sum_{i<j<k, \atop  \boldsymbol{\pi}(i) > \boldsymbol{\pi}(j)>\boldsymbol{\pi}(k)} u_i u_j u_k - \sum_{i<j<k, \atop  \boldsymbol{\pi}(i) > \boldsymbol{\pi}(k)>\boldsymbol{\pi}(j)} u_i u_j u_k\\
&=& \sum_{i<j<k, \atop  \boldsymbol{\pi}(i)>\boldsymbol{\pi}(j)>\boldsymbol{\pi}(k)} u_i u_j u_k + \sum_{i<j<k, \atop  \boldsymbol{\pi}(j)>\boldsymbol{\pi}(k)>\boldsymbol{\pi}(i)} u_i u_j u_k +  \sum_{i<j<k, \atop  \boldsymbol{\pi}(k)>\boldsymbol{\pi}(i)>\boldsymbol{\pi}(j)} u_i u_j u_k. 
\end{eqnarray*}
\end{proof}

\begin{proof}[of Lemma \ref{formulaproto}]
To simplify calculations let $e_i(\boldsymbol{v})$ denote the $i$-th elementary symmetric polynomial for $i \in \{1,2,3\}$ and
$\boldsymbol{v} \in \mathbb{R}^n$, i.e.
$e_1(\boldsymbol{v})=\sum_{i}v_i$, $e_2(\boldsymbol{v})=\sum_{i<j} v_iv_j$ and $e_3(\boldsymbol{v})=\sum_{i<j<k} v_i v_j v_k$.
Using $\sum_{i} v_i^2 = e_1 (\boldsymbol{v})^2 - 2e_2 (\boldsymbol{v})$ it follows that
\begin{align*}
\inv(h_{\boldsymbol{\pi},\boldsymbol{u}}) &= e_2 (\boldsymbol{u}) = \frac{1}{2}\Big(e_1(\boldsymbol{u})^2 - \sum_{i} u_i^2 \Big) = \frac{1}{2}\Big(1 - \sum_{i} u_i^2\Big) \\
&= \frac{1}{2}\Big(1 - (n-1)r^2 - (1-(n-1)r)^2\Big)= r (n-1)-\frac{1}{2} r^2 n (n-1).
\end{align*}
Moreover, considering $r \in [\tfrac{1}{n},\tfrac{1}{n-1}]$ we get 
$\inv(h_{\boldsymbol{\pi},\boldsymbol{u}}) \in [\tfrac{1}{2} - \tfrac{1}{2(n-1)},\tfrac{1}{2} - \tfrac{1}{2n}]$,
implying $\tau(A_{h_{\boldsymbol{\pi},\boldsymbol{u}}}) \in [-1+\tfrac{2}{n},-1+ \tfrac{2}{n-1}]$, which completes the proof of the first assertion. 
Using $\sum_i v_i^3 = e_1(\boldsymbol{v})^3 - 3 e_1(\boldsymbol{v}) e_2(\boldsymbol{v}) + 3 e_3(\boldsymbol{v})$ and Lemma \ref{invexplicit} moreover it follows that
\begin{align*}
\invs(h_{\boldsymbol{\pi},\boldsymbol{u}}) &= \frac{1}{2} \inv(h_{\boldsymbol{\pi},\boldsymbol{u}}) -\frac{1}{2} \, b_{\boldsymbol{\pi}}(h) = 
 \frac{1}{2} e_2(\boldsymbol{u}) -\frac{1}{2} e_3(\boldsymbol{u}) \\
 &= \frac{1}{2} e_2(\boldsymbol{u}) -\frac{1}{2} \Big(\frac{1}{3}\sum_i u_i^3 - \frac{1}{3} e_1(\boldsymbol{u})^3 + e_1(\boldsymbol{u}) e_2(\boldsymbol{u})\Big)  \\
&= \frac{1}{2} e_2(\boldsymbol{u}) -  \frac{1}{6}\sum_i u_i^3 + \frac{1}{6}  - \frac{1}{2} e_2(\boldsymbol{u})  =
    \frac{1}{6}  -  \frac{1}{6}\sum_i u_i^3  \\
&=  \frac{1}{6}  -  \frac{1}{6} \big( (n-1)r^3 + (1-(n-1)r)^3 \big).
\end{align*}
Again considering $r \in [\tfrac{1}{n},\tfrac{1}{n-1}]$ we get 
$\invs(h_{\boldsymbol{\pi},\boldsymbol{u}}) \in [\tfrac{1}{6}-\tfrac{1}{6(n-1)^2}, \tfrac{1}{6}-\tfrac{1}{6n^2}]$,
implying $\rho(A_{h_{\boldsymbol{\pi},\boldsymbol{u}}}) \in [-1+\tfrac{2}{n^2},-1+ \tfrac{2}{(n-1)^2}]$, which completes the proof.
\end{proof} 
\begin{proof}[of Lemma \ref{pertab}]
The expression for $a_{\boldsymbol{\pi}}(\boldsymbol{u} + t\boldsymbol{\delta}) - a_{\boldsymbol{\pi}}(\boldsymbol{u})$ is easily verified:
\begin{align*}
a_{\boldsymbol{\pi}}(\boldsymbol{u} + t\boldsymbol{\delta}) - a_{\boldsymbol{\pi}}(\boldsymbol{u}) &= \sum_{i<j, \, \{i,j\} \in I_{\boldsymbol{\pi}}} ((u_i + \delta_i t)(u_j + \delta_j t) - u_i u_j) \\
&= t \sum_{i<j, \, \{i,j\} \in I_{\boldsymbol{\pi}}} \delta_j u_i +\delta_i u_j + t^2 \sum_{i<j, \, \{i,j\} \in I_{\boldsymbol{\pi}}} \delta_i \delta_j\\
&= t \left( \sum_{j<i, \, \{i,j\} \in I_{\boldsymbol{\pi}}} \delta_i u_j + \sum_{i<j, \, \{i,j\} 
\in I_{\boldsymbol{\pi}}} \delta_i u_j \right)+ t^2 \alpha_2 \\ 
&= t \sum_{i=1}^n \delta_i \sum_{j: \, \{i,j\} \in I_{\boldsymbol{\pi}}} u_j  + t^2 \alpha_2 = t \sum_{i=1}^n \delta_i a_i  + t^2 \alpha_2 =  \alpha_1 t + \alpha_2 t^2 
\end{align*}
To derive the expression for $b_{\boldsymbol{\pi}}(\boldsymbol{u} + t\boldsymbol{\delta}) - b_{\boldsymbol{\pi}}(\boldsymbol{u})$ notice that
\begin{align*}
b_{\boldsymbol{\pi}}(\boldsymbol{u} + t\boldsymbol{\delta}) - b_{\boldsymbol{\pi}}(\boldsymbol{u}) &= \sum_{i<j<k, \, \{i,j,k\} \in Q_{\boldsymbol{\pi}}} \left((u_i + \delta_i t) (u_j + \delta_j t) (u_k + \delta_k t) - u_i u_j u_k\right)\\
&= \sum_{i<j<k, \, \{i,j,k\} \in Q_{\boldsymbol{\pi}}} \delta_i \delta_j \delta_k t^3+ \delta_i \delta_j u_k t^2 + 
\delta_i \delta_k u_j t^2  \\
&\phantom{=}+ \delta_j \delta_k u_i t^2 + \delta_k u_i u_j t + \delta_j u_i u_k t + \delta_i u_j u_k t \\
&= t^3 \sum_{i<j<k, \, \{i,j,k\} \in Q_{\boldsymbol{\pi}}} \delta_i \delta_j \delta_k\\
&\phantom{=} + t^2 \sum_{i<j<k, \, \{i,j,k\} \in Q_{\boldsymbol{\pi}}} \delta_i \delta_j u_k  + \delta_i \delta_k u_j + \delta_j \delta_k u_i\\
&\phantom{=} + t \sum_{i<j<k, \, \{i,j,k\} \in Q_{\boldsymbol{\pi}}} \delta_k u_i u_j + \delta_j u_i u_k + \delta_i u_j u_k = \beta_1 t + \beta_2 t^2 + \beta_3 t^3
\end{align*}
since
\begin{align*}
\sum_{i<j<k:\atop \{i,j,k\} \in Q_{\boldsymbol{\pi}}} & \delta_i \delta_j u_k  + \delta_i \delta_k u_j + \delta_j \delta_k u_i \\
&= \sum_{i<j<k:\atop \{i,k,j\} \in Q_{\boldsymbol{\pi}}} \delta_i \delta_j u_k  +  \sum_{i<k<j:\atop \{i,k,j\} \in Q_{\boldsymbol{\pi}}} \delta_i \delta_j u_k +  \sum_{k<i<j:\atop \{k,i,j\} \in Q_{\boldsymbol{\pi}}} \delta_i \delta_j u_k \\
&= \sum_{i<j} \delta_i \delta_j \left( \sum_{i<j<k:\atop \{i,k,j\} \in Q_{\boldsymbol{\pi}}}  u_k  +  \sum_{i<k<j:\atop \{i,k,j\} \in Q_{\boldsymbol{\pi}}} u_k +  \sum_{k<i<j:\atop \{k,i,j\} \in Q_{\boldsymbol{\pi}}} u_k \right)\\
&= \sum_{i<j} \delta_i \delta_j  \sum_{k: \, \{i,k,j\} \in Q_{\boldsymbol{\pi}}}  u_k = \sum_{i<j} \delta_i \delta_j  c_{i,j} = \beta_2 
\end{align*}
and
\begin{align*}
\sum_{i<j<k:\atop \{i,j,k\} \in Q_{\boldsymbol{\pi}}} & \delta_k u_i u_j + \delta_j u_i u_k + \delta_i u_j u_k\\
&= \sum_{i<j<k:\atop \{i,j,k\} \in Q_{\boldsymbol{\pi}}}  \delta_k u_i u_j + \sum_{i<j<k:\atop \{i,j,k\} \in Q_{\boldsymbol{\pi}}} \delta_j u_i u_k + \sum_{i<j<k:\atop \{i,j,k\} \in Q_{\boldsymbol{\pi}}} \delta_i u_j u_k\\
&= \sum_{j<k<i:\atop \{i,j,k\} \in Q_{\boldsymbol{\pi}}}  \delta_i u_j u_k + \sum_{j<i<k:\atop \{i,j,k\} \in Q_{\boldsymbol{\pi}}} \delta_i u_j u_k + \sum_{i<j<k:\atop \{i,j,k\} \in Q_{\boldsymbol{\pi}}} \delta_i u_j u_k\\
&=\sum_{i=1}^n  \delta_i \left(\sum_{j<k<i:\atop \{i,j,k\} \in Q_{\boldsymbol{\pi}}}   u_j u_k + \sum_{j<i<k:\atop \{i,j,k\} \in Q_{\boldsymbol{\pi}}} u_j u_k + \sum_{i<j<k:\atop \{i,j,k\} \in Q_{\boldsymbol{\pi}}}  u_j u_k \right) \\
&=\sum_{i=1}^n  \delta_i \sum_{j<k}   u_j u_k  =\sum_{i=1}^n  \delta_i b_i =\beta_1.
\end{align*}
\end{proof}

\begin{proof}[of Lemma \ref{lemma:triangle-ineq}]
The inequality $c_{p,q} \ge 0$ immediately follows from $u_1, \dotsc, u_n \ge 0$.
Let
\[
\iota(i,j) = \iota(j,i) = \begin{cases}
1 & \textrm{if } (i<j \textrm{ and } \boldsymbol{\pi}(i) > \boldsymbol{\pi}(j)) \textrm{ or } (j<i \textrm{ and } \boldsymbol{\pi}(j) > \boldsymbol{\pi}(i)), \\
-1 & \textrm{otherwise}
\end{cases}
\]
and
\[
\gamma_{i,j,k} = \frac{1}{2}(1+\iota(i,j)\iota(i,k)\iota(j,k)) \in \{0,1\}
\]
for every $i,j,k \in \{1, \dotsc, n\}$.
Then we have $\{i,j,k\} \in Q_{\boldsymbol{\pi}}$ if and only if $\iota(i,j) \iota(j,k) \iota(i,k) = 1$ if and only if $\gamma_{i,j,k} = 1$.
Therefore $c_{i,j} = \sum_k \gamma_{i,j,k} u_k$ for every $i,j$.
So
\[
c_{p,r} + c_{q,r} - c_{p,q} = \sum_i (\gamma_{p,r,i} + \gamma_{q,r,i} - \gamma_{p,q,i}) u_i
\]
and it is enough to prove $\gamma_{p,r,i} + \gamma_{q,r,i} \ge \gamma_{p,q,i}$ for every $i$.
If $\gamma_{p,q,i} = 0$ or $\gamma_{p,r,i} = 1$ or $\gamma_{q,r,i} = 1$, then this is clear.
So suppose indirectly that $\gamma_{p,q,i} = 1$ and $\gamma_{p,r,i} = \gamma_{q,r,i} = 0$.
Then $\iota(p,q) \iota(p,i) \iota(q,i) = 1$, $\iota(p,r) \iota(p,i) \iota(r,i) = -1$ and $\iota(q,r) \iota(q,i) \iota(r,i) = -1$.
Multiplying these together, we get that $\iota(p,q) \iota(p,r) \iota(q,r) = 1$, so $\gamma_{p,q,r} = 1$.
However $\{p,q,r\} \notin Q_{\boldsymbol{\pi}}$, so $\gamma_{p,q,r} = 0$, contradiction.
\end{proof}

\begin{proof}[of Lemma \ref{sufficientprel}]
(i) Suppose that there are $p < q < r$ such that $\boldsymbol{\pi}(r) > \boldsymbol{\pi}(q) > \boldsymbol{\pi}(p)$.
Let $\delta_i = 0$ for every $i \neq p,q,r$.
We can fix a nonzero solution $(\delta_p,\delta_q,\delta_r) \in \mathbb{R}^3 \setminus \{0\}$ to the 
following system of homogeneous linear equations:
\[
\delta_p +\delta_q +\delta_r = 0, \qquad a_p\delta_p + a_q\delta_q + a_r\delta_r = 0.
\]
Then $\sum_i\delta_i = 0$, $\alpha_1 = 0$, moreover $\{p,q\}, \{p,r\}, \{q,r\} \notin I_{\boldsymbol{\pi}}$ and 
$\{p,q,r\} \notin Q_{\boldsymbol{\pi}}$, so $\alpha_2 = \beta_3 = 0$.
The numbers $\delta_p\delta_q$, $\delta_p\delta_r$, $\delta_q\delta_r$ cannot be all negative, so e.g., 
$\delta_p\delta_q \ge 0$.
Then using Lemma \ref{lemma:triangle-ineq} we obtain
\begin{align*}
\beta_2 &= c_{p,q}\delta_p\delta_q + c_{p,r}\delta_p\delta_r + c_{q,r}\delta_q\delta_r 
   \le (c_{p,r} + c_{q,r})\delta_p\delta_q + c_{p,r}\delta_p\delta_r + c_{q,r}\delta_q\delta_r \\
&= c_{p,r}\delta_p (\delta_q +\delta_r) + c_{q,r}\delta_q (\delta_p +\delta_r) = 
-c_{p,r}\delta_p^2 - c_{q,r}\delta_q^2 \le 0.
\end{align*}

Now suppose that there are $p<q<r<s$ such that $\boldsymbol{\pi}(q) > \boldsymbol{\pi}(p) > \boldsymbol{\pi}(s) > \boldsymbol{\pi}(r)$.
Let $\delta_i = 0$ for $i \neq p,q,r,s$.
We can fix a nonzero solution $(\delta_p,\delta_q,\delta_r,\delta_s) \in \mathbb{R}^4 \setminus \{0\}$ to 
the following system of homogeneous linear equations:
\[
\delta_p +\delta_q = 0, \qquad\delta_r +\delta_s = 0, \qquad a_p\delta_p + a_q\delta_q + a_r\delta_r + a_s\delta_s = 0.
\]
Then $\sum_i\delta_i = 0$, $\alpha_1 = 0$, and $\alpha_2 =\delta_p\delta_r +\delta_p\delta_s +\delta_q\delta_r +\delta_q\delta_s = (\delta_p +\delta_q)(\delta_r +\delta_s) = 0$.
Moreover $\{p,q,r\}, \{p,q,s\}, \{p,r,s\}, \{q,r,s\} \notin Q_{\boldsymbol{\pi}}$, so $\beta_3 = 0$.
We claim that
\[
\beta_2 = -c_{p,q}\delta_p^2 + (c_{p,r} + c_{q,s} - c_{p,s} - c_{q,r})\delta_p\delta_r - c_{r,s}\delta_r^2 \le 0.
\]
Let $d = c_{p,r} + c_{q,s} - c_{p,s} - c_{q,r}$.
Lemma \ref{lemma:triangle-ineq} implies that $c_{p,q} \ge |c_{q,r}-c_{p,r}|$ and $c_{p,q} \ge |c_{p,s}-c_{q,s}|$, so $2c_{p,q} \ge |c_{q,r}-c_{p,r}| + |c_{p,s}-c_{q,s}| \ge |d|$.
Similarly, $c_{r,s} \ge |c_{p,r} - c_{p,s}|$ and $c_{r,s} \ge |c_{q,s}-c_{q,r}|$, so $2c_{r,s} \ge |d|$ too.
Since either $-d \le 0$ or $d \le 0$, the equations
\begin{align*}
2 \beta_2 &= -2c_{p,q}\delta_p^2 + 2d\delta_p\delta_r - 2c_{r,s}\delta_r^2 = -d(\delta_p -\delta_r)^2 - (2c_{p,q}-d)\delta_p^2 - (2c_{r,s}-d)\delta_r^2 \\
&= d(\delta_p +\delta_r)^2 - (2c_{p,q}+d)\delta_p^2 - (2c_{r,s}+d)\delta_r^2
\end{align*}
imply that $\beta_2 \le 0$.
\end{proof}

\begin{proof}[of Lemma \ref{adequ}]
Each of the two conditions is true for $\boldsymbol{\pi}$ if and only if it is true for $\boldsymbol{\pi}^{-1}$.
It is easy to see that the second condition implies the first one.
Conversely, suppose that $\boldsymbol{\pi}$ (and hence also $\boldsymbol{\pi}^{-1}$) satisfies the first condition.
We prove by induction on $l$. The statement is trivial for $l = 1$, so let $l \ge 2$.
If $\boldsymbol{\pi}(l) = 1$ then we can use the induction hypothesis for $\boldsymbol{\pi}|_{\{1, \dotsc, l-1\}} \in \sigma_{l-1}$.
If $\boldsymbol{\pi}(1) = l$ then we can use the induction hypothesis for $\boldsymbol{\pi}' \in \sigma_{l-1}$, where $\boldsymbol{\pi}'(i) = \boldsymbol{\pi}(i+1)-1$ 
for every $i \in \{1, \dotsc, l-1\}$.
So we may assume $\boldsymbol{\pi}(1) \neq l$ and $\boldsymbol{\pi}(l) \neq 1$.

Suppose that $\boldsymbol{\pi}^{-1}(l) > \boldsymbol{\pi}^{-1}(1)$ and set $k = \boldsymbol{\pi}^{-1}(l)$.
If $\boldsymbol{\pi}^{-1}(1) < k-1 < k$, then $1 = \boldsymbol{\pi}(\boldsymbol{\pi}^{-1}(1)) < \boldsymbol{\pi}(k-1) < \boldsymbol{\pi}(k) = l$, which contradicts the condition on $\boldsymbol{\pi}$.
Consider $\boldsymbol{\pi}^{-1}(1) = k-1$. If $i < j < k$, then we cannot have $\boldsymbol{\pi}(i) < \boldsymbol{\pi}(j)$, because then we would have 
$\boldsymbol{\pi}(i) < \boldsymbol{\pi}(j) < \boldsymbol{\pi}(k) = l$, contradicting the condition on $\boldsymbol{\pi}$.
If $k-1 < i < j$ then we cannot have $\boldsymbol{\pi}(i) < \boldsymbol{\pi}(j)$, because then we would have 
$1 = \boldsymbol{\pi}(\boldsymbol{\pi}^{-1}(1))=\boldsymbol{\pi}(k-1) < \boldsymbol{\pi}(i) < \boldsymbol{\pi}(j)$, contradicting the condition on $\boldsymbol{\pi}$.
So $\boldsymbol{\pi}(i) > \boldsymbol{\pi}(i+1)$ for every $i \in \{1, \dotsc, l-1\} \setminus \{k\}$, hence $\boldsymbol{\pi}$ is almost decreasing.

Now suppose that $\boldsymbol{\pi}^{-1}(l) < \boldsymbol{\pi}^{-1}(1)$.
If $\boldsymbol{\pi}(l) < \boldsymbol{\pi}(1)$, then the condition on $\boldsymbol{\pi}$ is false for $p = 1$, $q= \boldsymbol{\pi}^{-1}(l)$, $r = \boldsymbol{\pi}^{-1}(1)$, $s = l$.
So $\boldsymbol{\pi}(l) > \boldsymbol{\pi}(1)$ and $(\boldsymbol{\pi}^{-1})^{-1}(l) > (\boldsymbol{\pi}^{-1})^{-1}(1)$. Applying the previous paragraph to 
$\boldsymbol{\pi}^{-1}$ shows that $\boldsymbol{\pi}^{-1}$ is almost decreasing.
\end{proof}






\end{document}